\newtheorem{theorem}{Theorem}[section]
\newtheorem{corollary}{Corollary}[section]
\newtheorem{lemma}{Lemma}[section]
\theoremstyle{definition}
\newtheorem{definition}{Definition}[section]
\newtheorem{example}{Example}[section]
\newtheorem{remark}{Remark}[section]
\newcommand{\thmref}[1]{Theorem~\ref{#1}}
\newcommand{\secref}[1]{Section~\ref{#1}}
\newcommand{\lemref}[1]{Lemma~\ref{#1}}
\newcommand{\coref}[1]{Corollary~\ref{#1}}
\providecommand{\Real}{\mathop{\rm Re}\nolimits}
\providecommand{\Imag}{\mathop{\rm Im}\nolimits}
\def\r{\mathbb R}
\def\h{\mathbb H}
\def\Hip{\mathbb H}
\let\hip=\Hip
\def \Cx {{\mathbb C}}
\newcommand{\R}{\mathbb R}
\newcommand{\C}{\mathbb C}
\newcommand{\di}{\mathbb D}
\newcommand{\N}{\mathbb N}
\newcommand{\Ne}{{\mathbb N}^*}
\newcommand{\wt}{\widetilde}
\newcommand{\wh}{\widehat}
\renewcommand{\Re}{{\rm Re}}
\let\h=\hip
\let\re=\R
\def\rmd{\mathop{\rm d\kern -1pt}\nolimits}
\def\rme{\mathop{\rm e\kern -1pt}\nolimits}
\def\bel{ \medskip
 \centerline{$ \ast \hbox to 1.0cm{}\ast \hbox to 1.0cm{}\ast $}
}
\def\goto{\rightarrow}
\def\longerrightarrow{-\kern-5pt\longrightarrow}
\def\star{\lower 1pt\hbox{*}}
\def \nulset {
\raise 1pt\hbox{ \hskip -3pt$\not$\kern -0.2pt \raise
.7pt\hbox{${\scriptstyle\bigcirc}$}}}
\newcommand{\hd}{\mathbb{H}^2}
\newcommand{\sd}{\mathbb{S}^2}
\newcommand{\hi}[1]{\mathbb{H}^#1}
\newcommand{\ch}{\cosh}
\newcommand{\sh}{\sinh}
\newcommand{\pain}{\partial_{\infty}}
\newcommand{\ov}[1]{\overline{#1}}
\let\leq=\leqslant
\let\geq=\geqslant
\begin{document}

\title[total curvature of minimal surfaces]{Concentration of total curvature of
minimal surfaces in $\hi2\times \r$ }

\author[R. Sa
Earp $\ $ and $\ $ E. Toubiana  ]{
 Ricardo Sa Earp and
Eric Toubiana}

 \address{Departamento de Matem\'atica \newline
  Pontif\'\i cia Universidade Cat\'olica do Rio de Janeiro\newline
Rio de Janeiro \newline
22453-900 RJ \newline
 Brazil }
\email{rsaearp@gmail.com}

\address{Institut de Math\'ematiques de Jussieu - Paris Rive Gauche \newline
Universit\'e Paris Diderot - Paris 7 \newline
Equipe G\'eom\'etrie et Dynamique,  UMR 7586 \newline
B\^atiment Sophie Germain \newline
Case 7012 \newline
75205 Paris Cedex 13 \newline
France}
\email{eric.toubiana@imj-prg.fr}

\thanks{
 Mathematics subject classification: 53A10, 53C42, 49Q05.
\\
The  first author was partially supported by CNPq
 of Brasil.
}

\date{\today}

\begin{abstract}
 We prove a phenomenon of concentration of total curvature for stable minimal surfaces
 in  the product space $\hip^2 \times \R,$ where $\hip^2$ is the
hyperbolic plane. Under some
geometric
conditions  on the asymptotic boundary  of an oriented  stable
minimal surface
immersed in
$\hip^2 \times \R$, it has infinite  total curvature.
In particular,  we infer that  a minimal graph $M$ in $\hip^2 \times \R$  whose
asymptotic boundary is a graph over an arc of
$\partial_{\infty}\hi2 \times \{0\}$, different from the asymptotic
boundary of $\partial M$,
has infinite  total curvature. Consequently,
if $M$ is a stable minimal surface immersed into $\hip^2 \times \R$ with compact boundary,
such that its  asymptotic boundary
is a
graph over  the whole $\partial_{\infty}\hi2 \times \{0\},$ then it has infinite
total curvature.
We exhibit an example of a minimal graph such that in a domain
whose
asymptotic boundary is a vertical segment the total
curvature is finite, but  the total curvature of the graph  is infinite,
by the theorem  cited before.  We also
present some simple  and peculiar examples of infinite total curvature minimal
surfaces in $\hip^2 \times \R$   and their asymptotic boundaries.

\end{abstract}

\keywords{minimal surface, asymptotic boundary, stable minimal surfaces, finite total
curvature.}

\maketitle

\section{Introduction}

In this paper, we prove a phenomenon of concentration of total curvature
for stable minimal surfaces in  the product space $\hip^2 \times
\R$,
where $\hip^2$ is the hyperbolic plane.

We recall that a minimal surface $M$ immersed in $\hip^2\times \R$
 has  {\em finite intrinsic total curvature}, or simply {\em finite total
curvature},  if $\int_M K  \, dA$ is
finite, where $K$ is the  (intrinsic) Gaussian curvature of $M$.

 Our main theorem,
see \thmref{Main theorem},  ensures that under some geometric
conditions  on the asymptotic boundary, an oriented stable minimal
surface  immersed in
$\hip^2 \times \R$  (but not necessarily properly immersed) has
infinite  total curvature.
In particular,  we prove that  a minimal graph $M$ in $\hip^2
\times \R$
whose  asymptotic
boundary is a graph over an arc of $\partial_{\infty}\hi2 \times
\{0\}$,
different from the asymptotic boundary of $\partial M$,
has infinite  total curvature.

 Consequently, we infer the following corollary  of \thmref{Main theorem}:
Let $M$ be an oriented  stable minimal surface immersed into $\hip^2 \times \R$
with
compact boundary (e. g. a minimal graph with compact boundary),
such that its  asymptotic boundary
is a
graph over  the whole $\partial_{\infty}\hi2 \times \{0\}$. Then $M$ has
infinite
total curvature (\coref{C.arc}).

Furthermore, we deduce that  if $p_\infty\in  \pain M $,
 we have $\vert n_3(p)\vert \to 1$ if
$p \to p_\infty$, $p\in M$, where $n_3$ is the third coordinate of the
unit normal field  (\coref{C. end}).

\vskip1.5mm
   In  \secref{remarkable}  we exhibit  an example   of
a   minimal graph such that in a domain
whose asymptotic boundary is a vertical segment the total curvature is finite,
but the total curvature of the graph  is
infinite by    \thmref{Main theorem}.

\vskip1.5mm

We also
present some simple  and peculiar examples of infinite total curvature minimal
surfaces in $\hip^2 \times \R$   and their asymptotic boundaries.

\vskip2mm

 We would like to point out that in our previous work,  we proved
a  geometric property of an  oriented stable minimal
 annuli end $M$  with compact boundary, properly immersed into $\Hip^2\times
\R$,
see \cite{SE-T3}. We    introduced the
notion of asymptotic boundary of $M$  that is
suitable for our studies, as well. We call the set of points of the
asymptotic boundary of $M$ with finite  (vertical)  height,
the {\em finite asymptotic boundary} (see Definition \ref{D.asymptotic}
below). The main result in \cite{SE-T3} ensures
that if  the end  $M$ converges to a vertical plane
and
the  finite  asymptotic boundary  of $M$ is contained in two vertical
lines,
then $M$ has finite total curvature. If
the end of $M$ is embedded we showed that it is a horizontal
graph with respect to a
horizontal geodesic  $\gamma$, or simply a horizontal graph.
For a definition of  a
horizontal graph, see, for instance, \cite{HNST}.
 Loosely speaking in certain sense  the main result in  \cite{SE-T3}
is a counterpart of the main theorem in the present
paper.  Summarizing, we proved  in \cite{SE-T3}
the following geometric behavior, up to a compact part of $M$:
 \begin{itemize}
 \item  Any  equidistant curve of $\gamma$ intersects the end
$M$ at most at one point and it
 intersects it transversally (\cite[ Proposition A.3]{HNST}
and \cite[Step 7]{SE-T3}.

 \vskip1.5mm

 \item  Let $n_3$ be  the third coordinate of the
unit normal field on $M$ with respect to the product metric
on $\hi2 \times \R$.
We have that $n_3 (p) \to 0$ uniformly when $p \to
\pain E$ \cite[Step 3]{SE-T3}.  Consequently, the tangent plane
throughout the end is nowhere horizontal.
\end{itemize}

It will be very interesting to investigate a similar result as in
\cite{SE-T3}, for minimal surfaces immersed into $\hip^2\times \R$ with
nonempty  non compact boundary. In this
direction we set here the following problem: Find geometric conditions on a
minimal surface $S$ immersed into $\hip^2\times \R,$ with  nonempty  non compact
boundary, whose finite asymptotic boundary
is contained in a vertical line to have  finite total curvature.
The  example exhibited in \secref{remarkable}   suggests  this problem.

\vskip1.5mm

We remark that
 a horizontal graph with respect a geodesic $\gamma$ which is
transverse
to the  intersecting equidistant curves  to $\gamma$ is stable.  A
vertical graph (see the
definition, for instance, in \cite{SE-T2}), which is transverse to the
intersecting
vertical geodesics is stable, as well. This follows from the classical criterion
of stability for minimal surfaces:
 Let $M$ be an oriented connected minimal surface immersed into $\hip^2\times
\R.$
If there exists a positive smooth function $u$ on a bounded domain $\Omega$ of
$M$ satisfying $\mathcal L u=0, $
where $\mathcal L$ is the stability operator \cite[Section 2.2]{B-SE}, then
$\Omega$ is stable \cite[Lemma 1.36]{C-M}.

\vskip1.5mm

 We point out now an important property of a complete minimal
surface immersed in $\hip^2\times \R$:
The finite asymptotic boundary of a
complete
minimal surface in $\Hip^2\times \R$ with finite total curvature is constituted
of vertical lines, see  \cite[Theorem 2.1 and
Proposition 2.4]{HNST}.

\vskip1.5mm

Notice first that there are many  minimal surfaces in $\hip^2 \times \R$
 whose finite asymptotic boundary is the union of regular curves, see, for
instance, M. Rodriguez and F. Martin \cite{Ma-R} and the authors \cite{S-E}, \cite{SE-T1}.
 However, there are ``local obstructions'' to a curve be the asymptotic boundary
 of a minimal surface in $\Hip^2\times \R$, see \cite[Theorem 2.1]{SE-T2}.
  Also, B. Coskunuzer gave a necessary and sufficient condition on
a finite collection of Jordan curves in $\pain \hi2 \times  \R$ to be the
asymptotic
boundary of a complete area minimizing surface in $\hi2 \times  \R$,
\cite[Theorem 2.13]{Coskunuzer}. Afterward, B. Kloeckner and R. Mazzeo
generalized this result for a finite collection of Jordan curves in
$\pain (\hi2 \times  \R)$, \cite[Proposition 4.4]{K-M}.

\vskip1.5mm

 We wish next to describe briefly the behavior of certain minimal
surfaces in
$\Hip^2\times \R$,
   for their interesting properties  related to the results in this paper:
   First, we would like to summarize the
example
given by F. Morabito and M. Rodriguez \cite{Mo-R}: It  consists of a complete
minimal surface $M$ in
$\hip^2 \times \R$, invariant by a discrete   group of vertical translation.
The finite
asymptotic boundary is the finite union of vertical lines in
$\partial_{\infty}\hi2 \times \R.$ It is interesting to
note that {\em any}  nonempty domain  $S\subset M$ of finite
vertical  height has
{\em finite total curvature}. One can choose such $S$ to be a vertical graph. Of
course, the surface $M$ has infinite total
curvature, but the total curvature {\em does not concentrates}   in a
subset $S\subset M$ of bounded vertical height whose asymptotic boundary is a
vertical segment.  The reader is referred to the constructions due
to
L.  Hauswirth and A. Menezes \cite{H-M}, to find other related results.
\vskip1.5mm
 Secondly, let us consider now a classical minimal surface $M_1$ in
 $\hi2 \times \R$ which  has been useful  as  barrier in
many papers  about minimal graphs theory,
see  P. Collin and H. Rosenberg \cite[Lemma 1]{C-R}, B. Nelli and H.
Rosenberg \cite[Errata corrige 4 (b)]{N-R} and the authors
\cite[Theorem 4.1 (3)]{SE-T2}.
The surface $M_1$   is globally a vertical graph,
see \cite[Equation (32)]{S-E} for an explicit formula.
Thus, $M_1$ is stable.  The surface $M_1$
has been characterized by I. Fern\'andez and P. Mira \cite{F-M}.
A generalization of $M_1$ was carried out by the joint work of  J.
A. G\' alvez and H. Rosenberg \cite[Proposition 3.1]{G-R}, and by the
work of J. Plehnert \cite[Section 3.2]{P}.

 As a matter of fact, $M_1$ is invariant
under an one parameter group of hyperbolic translations along a
horizontal geodesic
 $\gamma \subset \hi2 \times \{0\}$.  Let us denote by
 $p_\infty, q_\infty$ the asymptotic
boundary of $\gamma$. Then the finite asymptotic boundary of $M_1$ is
composed of two vertical
half-lines $\partial_\infty\hi2 \times (0,+\infty)$,  issuing
from $p_\infty$ and $ q_\infty$,
and one of the two arcs of
$\partial_\infty\hi2 \times \{0\}\setminus \{p_\infty, q_\infty\}$,
see \cite[Proposition 2.1]{SE-T2}.
Let $S$ be a nonempty open subset  of $M_1$ whose asymptotic
boundary is
{\em a vertical segment} of the finite asymptotic boundary of $M_1.$
 Since $M_1$ is invariant by horizontal translations, it follows that
$S$ has infinite total curvature.

\vskip1.5mm
We observe that in $\hi2 \times \R$, finite total curvature of a complete oriented
minimal surface implies finite index. This is a theorem by P. B\'erard and the first author in
\cite{B-SE}. Notice  that in Euclidean space a famous result
of D. Fisher-Colbrie \cite{F-C} states that
a complete oriented minimal surface has finite index if and only if it has
finite total curvature.  Notice also that  finite index does not
imply finite total curvature in $\hi2 \times \R$,
as the preceding example shows. The catenoid in $\hip^2\times \R$  is another
counter-example: It has
infinite total curvature and index one \cite[Proposition 3.3 and Theorem
3.5]{B-SE}.

We pause momentarily to ask  here if the assumption ``complete'' can
be removed from the  B\'erard-Sa Earp  theorem ?  In $\R^3$,
S.-Y. Cheng and J. Tysk showed that a complete minimal surface with boundary
and with finite index has finite total curvature \cite[Theorem 5]{Cheng-Tysk}.
Afterward, A. Grigor'yan and S.-T. Yau  generalized this result assuming only
that $M$ is a minimal surface with finite index, that is they dropped the
assumption of completness \cite[Theorem 4.9]{Grig-Yau}.

\vskip1.5mm

 We observe that the two examples above show that when the finite asymptotic
boundary of a minimal surface in $\hi2 \times \R$ is a vertical
segment then the total curvature may be finite or infinite.

\subsection*{Acknowledgements}
{\Small   The second author wishes
to thank the {\em Departamento de Matem\'atica da
PUC-Rio} for their kind hospitality. }

\section{ Finite asymptotic boundary}\label{Sec.Asym}

\begin{definition}[Convergence to an asymptotic boundary point of $\h^2$]
\label{asymptotic convergence H2}
 Let $y_0\in \h^2$ be a fixed point of $\h^2$ and let
 $x_\infty \in \pain \h^2$. We denote by $[y_0, x_\infty) \subset \h^2$ the
geodesic ray issuing from  $y_0$ and with asymptotic boundary $x_\infty$. For
any $\rho >0$ we denote by $\gamma_\rho \subset \h^2$ the geodesic intersecting
the ray $[y_0, x_\infty)$ orthogonally at point $y_\rho$ such that
$d_{\h^2}(y_0,y_\rho)=\rho$. Let $\gamma_\rho^+$ be the component of
$\h^2 \setminus \gamma_\rho$ which contains $x_\infty$ in its asymptotic
boundary: $x_\infty \in \pain \gamma_\rho^+$.

Let $(x_n)$ be a sequence of points of $\h^2$. We say that
{\em $(x_n)$ converges to $x_\infty$}, denoted by $x_n \to x_\infty$, if for
any $\rho >0$ there exists $n_\rho \in \N$ such that $x_n \in  \gamma_\rho^+$
for any $n\geq n_\rho$.

\smallskip

We observe that if we choose the  Poincar\'e disc  model of $\h^2$,
then
$x_n \to x_\infty$ if and only if the sequence $(x_n)$ converges to $x_\infty$
in Euclidean sense.

Also, let us consider the Poincar\'e half-plane model of $\h^2$, then in this
model $\pain \h^2 = \R\cup \{ \infty \}$.
We have:
\begin{itemize}
 \item If $x_\infty \in \R$ then $x_n \to x_\infty$ if and only if the
sequence $(x_n)$ converges to $x_\infty$
in Euclidean sense.

\item If $x_\infty = \infty$ then $x_n \to x_\infty$ if and only if
$\vert x_n \vert \to +\infty$.
\end{itemize}

\end{definition}

\begin{definition}[Asymptotic boundary]\label{D.asymptotic}
$  $
\begin{enumerate}

\item We define the {\em asymptotic boundary} of $\hi2\times \R$ setting:
\begin{equation*}
 \partial_{\infty}(\h^2\times\R):=  \big(\partial_{\infty}\hi2 \times \R\big)
 \cup \big(\hi2 \times\{-\infty, + \infty\}    \big)
 \cup \big( \partial_{\infty} \hi2  \times\{-\infty, + \infty\}\big).
\end{equation*}
This decomposition means that  for a divergent sequence
$(p_n)$ of $\hi2 \times \R$  there are three possibilities for {\em converging
to infinity} (up to extracting a subsequence). That is, setting
$p_n=(x_n,t_n)\in \hi2\times \R$, we have the following cases:
\begin{itemize}
 \item $x_n \to x_\infty \in  \partial_{\infty}\hi2$
 (see Definition \ref{asymptotic convergence H2})  and
$t_n \to t_0\in \R$.
We say that $p_\infty:=(x_\infty,t_0)\in \partial_{\infty}\hi2 \times \R $ is
an asymptotic point {\em at finite height}.

\item $x_n\to x_0 \in \hi2$
and $t_n \to \pm \infty$. That  is $(p_n)$ converges to \newline
$p_\infty:=(x_0,\pm \infty)\in \hi2 \times\{-\infty, + \infty\}$.

\item  $x_n \to x_\infty \in  \partial_{\infty}\hi2$ and
$t_n \to \pm \infty$. That  is $(p_n)$ converges to \newline
$p_\infty:=(x_\infty,\pm \infty) \in \partial_{\infty} \hi2  \times
\{-\infty, +\infty\} $.
\end{itemize}

\vskip1.5mm

\item Let $\Omega \subset \hi2 \times \R $ be a nonempty subset.
We say that a  point
$p_\infty \in  \partial_{\infty}(\h^2\times\R)$ is an {\em asymptotic point} of
$\Omega$ if there is a sequence $(p_n)$ of $\Omega$ converging to $p_\infty$.

The set of asymptotic points of
$\Omega$, called the {\em asymptotic boundary of $\Omega$}, is denoted by
 $\partial_{\infty} \Omega$.

\vskip1.5mm

\item\label{item.finite} Let $\Omega \subset \hi2 \times \R $ be a
nonempty subset. The set of asymptotic points
  at finite height is called the {\em finite asymptotic boundary} and
  is denoted by $\partial_{\infty}^f \Omega$.

  The complement
$\pain \Omega \setminus \pain^f \Omega$ is called the {\em non finite
asymptotic boundary of $\Omega$}.

 We say that the finite asymptotic boundary $\partial_{\infty}^f \Omega$
 has {\em bounded vertical height} if
\begin{equation*}
  \exists t_1 >0,\, \partial_{\infty}^f \Omega \subset \{ (e^{i\, \theta}, t)
\in \partial_{\infty}\hi2 \times \R,\, |t|< t_1\}
\end{equation*}

\end{enumerate}
\end{definition}

\begin{definition}[Boundary of a surface]\label{D.boundary}
 Let $S\subset \hi2 \times \R$ be a surface.

\begin{enumerate}
 \item A point $p\in S$ is called an {\em interior point} of $S$ if there
 exists a proper embedding $Y : \di \rightarrow \hi2 \times \R$ such that
 $Y(\di)\subset S$ and $Y(0)=p$. The set of interior points of $S$ is denoted
 ${\rm int}\, (S)$.

\item  We define the {\em boundary} of $S$, denoted $\partial S$, as:
\begin{equation*}
 \partial S := \ov S \setminus {\rm int}\, (S),
\end{equation*}
where $\ov S$ is the closure of $S$ in $\hi2 \times \R$, we have
$\partial S \subset \hi2 \times \R$.
\end{enumerate}
\end{definition}

\vskip1.5mm

\section{Geometric Lemmas}

 The following result describes, in particular, the local behavior at infinity of
 a minimal surface  $M$ in
  $\h^2 \times \R$, whose finite asymptotic boundary  is an arc in
 $\pain^f M \setminus \pain (\partial M)$,
  which is
  not contained in a vertical line.

\begin{lemma}\label{L.Main Lemma}
Let $M $ be a connected  immersed minimal surface in $\h^2\times\R$. Assume
that:
\begin{enumerate}

\item\label{item.asymptotic boundary}   The finite asymptotic
boundary of $M$
is composed of an arc
$\alpha$  properly embedded in  $\pain \h^2 \times \R$.
\vskip1.5mm
\item\label{item.boundary surface}
There exists an open and simple arc $\alpha_0 \subset \alpha$ in
$\pain^f M \setminus \pain (\partial M)$ which is not contained in a vertical
line.
\end{enumerate}

\vskip1.5mm

Let $p_\infty :=(x_\infty, t_0)\in \pain \h^2 \times \R$, be any point of
$\alpha_0$ which does not belong to a vertical segment contained in
$\alpha_0$.

Then, for any $\,\varepsilon > 0$,
there exist a vertical plane $P_\varepsilon$ and
a component $P_\varepsilon^+$ of
$(\h^2\times\R) \setminus P_\varepsilon$,
such that, setting $S_\varepsilon:=M\cap P_\varepsilon^+$, we have

\vskip1.5mm

\claim \label{claim.1} $S_\varepsilon\subset \h^2 \times (t_0-\varepsilon, t_0
+\varepsilon)$,

\claim \label{claim.2}$\partial S_\varepsilon \subset P_\varepsilon$,

\claim \label{claim.3} {\em the asymptotic boundary of $S_\varepsilon$ is a
subarc
$\alpha_\varepsilon$ of $\alpha_0$ which is not contained in a vertical line:
$\pain S_\varepsilon=\alpha_\varepsilon \subset \alpha_0$,}

\claim \label{claim.4}{\em $p_\infty \in \alpha_\varepsilon$ and
$p_\infty \notin \pain P_\varepsilon$,}

\claim\label{claim.5} {\em $\pi (\alpha_\varepsilon)= \pi (\pain
P_\varepsilon^+)$, where
$\pi: \pain\h^2\times\R\goto \pain \h^2$ is the first projection.}

\claim  \label{item.limit} {\em Assume that $M$ is not contained in the slice
$\h^2 \times \{t_0\}$.
Then for any $\varepsilon >0$ there exists  $\varepsilon_0<\varepsilon$ such
that
for any $\varepsilon^\prime < \varepsilon_0$, $P_{\varepsilon^\prime}^+$ is
strictly
contained in $P_\varepsilon^+$. Hence $S_{\varepsilon^\prime}$ is strictly
contained in $S_\varepsilon$.  Furthermore
$\bigcap_{\varepsilon >0} P_\varepsilon^+=\emptyset$.

Observe that
$\pain S_\varepsilon = \pain^f S_\varepsilon$.}
\end{lemma}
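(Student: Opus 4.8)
The plan is to produce the cutting plane $P_\varepsilon$ by adapting it to the arc $\alpha_0$ near $p_\infty$, and then to confine $S_\varepsilon$ to the thin slab $\h^2\times(t_0-\varepsilon,t_0+\varepsilon)$ by exploiting that the height function is harmonic on a minimal surface. First I would set up a local model: pass to the Poincar\'e disc, so that $\pain\h^2=\{e^{i\theta}\}$ and $x_\infty=e^{i\theta_0}$, and recall that a vertical plane is $P=\gamma\times\R$ for a geodesic $\gamma\subset\h^2$, with $\pain P^+$ the open arc of $\pain\h^2$ cut out by the two endpoints of $\gamma$ (times $\R$). From the hypothesis that $p_\infty$ lies on no vertical segment of the simple arc $\alpha_0$ I would extract that a subarc $\beta\subset\alpha_0$ containing $p_\infty$ in its interior is transverse to the vertical direction, so that $\pi|_\beta$ maps $\beta$ homeomorphically onto an arc $J\subset\pain\h^2$ with $x_\infty\in\mathrm{int}\,J$, and, after shrinking $\beta$, the heights along $\beta$ stay within $(t_0-\varepsilon/2,t_0+\varepsilon/2)$. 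For small $\delta>0$ I take $\gamma_\varepsilon$ to be the geodesic with endpoints $e^{i(\theta_0\pm\delta)}$ and $P_\varepsilon^+$ the side whose asymptotic arc $J_\varepsilon:=\pi(\pain P_\varepsilon^+)\ni x_\infty$ satisfies $\overline{J_\varepsilon}\subset J$; the candidate asymptotic boundary is then $\alpha_\varepsilon:=\beta\cap\pi^{-1}(J_\varepsilon)$, a subarc with $\pi(\alpha_\varepsilon)=J_\varepsilon$ and $p_\infty$ interior. This already packages Claims \ref{claim.4} and \ref{claim.5}, modulo identifying $\pain S_\varepsilon$ with $\alpha_\varepsilon$.

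Next I would use properness to pin down the finite and asymptotic boundaries of $S_\varepsilon$. Since $x_\infty\in\pain^f M\setminus\pain(\partial M)$ and $\pain(\partial M)$ is closed, shrinking $\delta$ makes $\overline{J_\varepsilon}\times\R$ disjoint from $\pain(\partial M)$; properness of $\partial M$ then forces $\partial M\cap\overline{P_\varepsilon^+}=\emptyset$, so that $\partial S_\varepsilon=M\cap P_\varepsilon\subset P_\varepsilon$, which is Claim \ref{claim.2}. For Claim \ref{claim.3}, since $\alpha$ is a properly embedded arc, only a compact portion of it visits the slab region over $\overline{J_\varepsilon}$; shrinking $\delta$ once more, relative to the compact part of $\alpha$ lying at heights $\geq t_0+\varepsilon$ or $\leq t_0-\varepsilon$, leaves $\alpha_\varepsilon$ as the only portion of $\alpha$ over $\overline{J_\varepsilon}$, so that $\pain^f S_\varepsilon=\alpha_\varepsilon$, a subarc of $\alpha_0$ not contained in a vertical line.

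The heart of the argument is Claim \ref{claim.1}. I would use the standard fact that the height $t$ restricts to a harmonic function on any minimal surface in $\h^2\times\R$, and apply the maximum principle to $t$ on $S_\varepsilon$, whose boundary splits into the asymptotic part $\alpha_\varepsilon$, where $|t-t_0|<\varepsilon/2$ by construction, and the finite part $\partial S_\varepsilon\subset P_\varepsilon$. If the heights along $\partial S_\varepsilon$ also lie in the slab, the maximum principle immediately yields $S_\varepsilon\subset\h^2\times(t_0-\varepsilon,t_0+\varepsilon)$. The main obstacle is exactly this control on the finite boundary, equivalently ruling out that $M$ re-enters $P_\varepsilon^+$ along $P_\varepsilon$ at heights outside the slab or escapes to infinite height over $\overline{J_\varepsilon}$. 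I expect to resolve it by a contradiction-and-extraction argument: a sequence in $S_\varepsilon$ with heights $\geq t_0+\varepsilon$ either stays bounded, producing by the maximum principle an interior extremum of the harmonic function $t$, which is impossible, or diverges to a finite-height asymptotic point over $\overline{J_\varepsilon}$ off the slab, contradicting the previous paragraph; the infinite-height escape is excluded by first confining the relevant piece with the same horizontal barriers. Propagating the slab confinement from $\alpha_\varepsilon$ across $P_\varepsilon$ is the delicate step.

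Finally, for the last item, I would choose $\delta=\delta(\varepsilon)$ continuous and strictly increasing, which makes the half-spaces nested. Since $t-t_0$ is harmonic and, when $M\not\subset\h^2\times\{t_0\}$, not identically zero, confining $S_\varepsilon$ to an $\varepsilon$-thin slab forces $\delta(\varepsilon)\to0$, hence $P_{\varepsilon^\prime}^+$ is strictly contained in $P_\varepsilon^+$ and $S_{\varepsilon^\prime}$ strictly in $S_\varepsilon$ for $\varepsilon^\prime$ small. As $\delta\to0$ the geodesics $\gamma_\varepsilon$ shrink to the single point $x_\infty$, so no interior point survives all of them and $\bigcap_{\varepsilon>0}P_\varepsilon^+=\emptyset$. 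The identity $\pain S_\varepsilon=\pain^f S_\varepsilon$ is then immediate from Claim \ref{claim.1}: bounded height leaves no room for asymptotic points at infinite height.
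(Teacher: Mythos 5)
Your construction of $P_\varepsilon$ is where the proof breaks down, and the flaw is structural rather than technical. You fix the cutting geodesic (the parameter $\delta$) using only data attached to the arc $\alpha$ and to $\pain(\partial M)$: heights along $\beta$ within $\varepsilon/2$ of $t_0$, $\overline{J_\varepsilon}\subset J$, avoidance of $\pain(\partial M)$. But whether $M\cap P_\varepsilon^+$ stays in the slab depends on the surface itself, not only on its asymptotic boundary. The surface $M_1$ of the introduction is a concrete counterexample to your scheme: it is a graph invariant by hyperbolic translations along a geodesic $\gamma\subset\h^2\times\{0\}$, its height is a decreasing function $\lambda$ of the distance to $\gamma$ (with $\lambda\to+\infty$ near $\gamma$ and $\lambda\to 0$ at infinity), its finite asymptotic boundary contains an arc of $\pain\h^2\times\{0\}$, and $\partial M_1=\emptyset$. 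For $p_\infty$ interior to that arc, \emph{every} small $\delta$ satisfies all of your selection criteria (the heights along $\beta$ are identically $0$), yet $M_1\cap P_\delta^+$ reaches height $\lambda(d_0)$, where $d_0$ is the distance from the cutting geodesic to $\gamma$; this exceeds $\varepsilon$ unless $\delta$ is small in a way governed by $\lambda$, i.e.\ by the surface. So for admissible choices of $\delta$ your Claim 1 is simply false, and consequently your proposed rescue must fail --- and indeed it is circular: in your dichotomy, a bounded sequence of points of $S_\varepsilon$ at heights $\geq t_0+\varepsilon$ may converge to a point of $\partial S_\varepsilon\subset P_\varepsilon$ at such a height, which produces no interior extremum of the harmonic height function and no contradiction; it is exactly the ``re-entry along $P_\varepsilon$'' scenario you set out to exclude. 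The paper avoids this by never prescribing the plane in advance: it considers the whole family $\Pi_\rho$ of planes orthogonal to the ray toward $x_\infty$ and proves by contradiction that \emph{some} $\rho'(\varepsilon)$ works --- if every $\rho$ failed, the escaping points $(y_n,t_0\pm\varepsilon)\in M\cap\Pi_{\rho_n}^+$ with $\rho_n\to+\infty$ would satisfy $y_n\to x_\infty$, forcing $(x_\infty,t_0\pm\varepsilon)$ to be a finite asymptotic point of $M$, which is incompatible with $\pain^f M=\alpha$, the fiber structure of the projection $\pi$, and the hypothesis that $p_\infty$ lies on no vertical segment of $\alpha_0$. The good plane exists only by this limiting argument; no maximum principle is used.

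There is a second, related gap at the very first step. You assert that, because $p_\infty$ lies on no vertical segment of $\alpha_0$, a subarc $\beta\ni p_\infty$ is ``transverse to the vertical direction'' and $\pi|_\beta$ is a homeomorphism onto an arc $J$. The hypothesis concerns only the point $p_\infty$: it does not prevent vertical segments of $\alpha_0$ from accumulating at $p_\infty$, nor, for an abstract properly embedded arc, does it prevent $\pi|_\beta$ from failing to be injective (the arc could oscillate back and forth over the same points of $\pain\h^2$). What makes the projection well behaved is not your hypothesis but the structure theorem for asymptotic boundaries of minimal surfaces, \cite[Theorem 2.1]{SE-T2}, which the paper invokes: $\pi(\beta_\rho)$ is an arc and each fiber of $\pi$ is a single point or a vertical segment. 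The pointwise hypothesis is then used only to conclude that the fiber over $x_\infty$ is the single point $p_\infty$, which is precisely what rules out the limit point $(x_\infty,t_0\pm\varepsilon)$ in the contradiction argument above. Your Claims 3--5, and your assertion that $\alpha_\varepsilon$ is the only portion of $\alpha$ over $\overline{J_\varepsilon}$, implicitly rest on this minimal-surface-specific structure theorem, which a blind topological argument cannot replace.
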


\begin{proof}
 In the following we identify $\hi2 \times \{0\}$ with $\hi2$.

Let $p_\infty = (x_\infty, t_0) \in \alpha_0$  be a point as in
the statement.

 Let $y_0\in \hi2 $ be a fixed point.
We denote by $\gamma_0^+ \subset \h^2 $, the
geodesic ray issuing from  $y_0$ and with asymptotic boundary
$x_\infty$. 

For
any $\rho >0$ we denote by $\Pi_\rho \subset \h^2 \times \R$ the geodesic
vertical plane intersecting
the ray $\gamma_0^+$ orthogonally at point $y_\rho$ such that
$d_{\h^2}(y_0,y_\rho)=\rho$. Let $\Pi_\rho^+$ be the component of
$(\h^2 \times \R) \setminus \Pi_\rho$ which contains $x_\infty$ in its
asymptotic boundary, thus we have
$x_\infty \in \pain \Pi_\rho^+$ and $x_\infty \notin \pain \Pi_\rho$.

\smallskip

For any $\rho >0$ we denote by $\beta_\rho \subset \alpha_0$ the connected
component of $\alpha_0 \cap \pain \Pi_\rho^+$ containing $p_\infty$:
$p_\infty \in \beta_\rho \subset \alpha_0 \cap \pain \Pi_\rho^+$.

\smallskip

Let $\pi: \pain\h^2\times\R\goto \pain\h^2$ be the first projection.
 Recalling that $\alpha_0$ is properly embedded, it
 follows from \cite[Theorem 2.1]{SE-T2} that
 there
exists
$\rho_0 >0$ such that for any $\rho \geq \rho_0$ then $\pi (\beta_\rho)$ is
an
arc and
\begin{equation}\label{Eq.projection}
\begin{aligned}
&\text{\em for any $z_\infty \in \pi (\beta_\rho)$ its inverse
image by $\pi$ in $\beta_\rho$
is either a single point} \\
&\text{\em or a vertical segment}.
\end{aligned}
\end{equation}

We can also assume that $\rho_0$ is large enough so that
$M\cap\Pi_{\rho_0}\not= \emptyset$.

Denoting by $p_\rho^+, p_\rho^- \in \pain \Pi_\rho$ the two
 endpoints of
$\beta_\rho$, we have $\pi (p_\rho^+) \not= \pi(p_\rho^-)$. Therefore we get
$\pi  (\beta_\rho) =\pi (\pain \Pi_\rho^+)$ if $\rho \geq \rho_0$.

Observe that, by a continuity argument, for any $\varepsilon >0$ there is
$\rho_\varepsilon >\rho_0$ such that for any $\rho >  \rho_\varepsilon$ we
have
$\beta_\rho \subset \pain \h^2 \times [t_0 -\varepsilon, t_0 +\varepsilon]$.

Furthermore, since the finite asymptotic boundary of $M$ is a properly
embedded arc, if
$\varepsilon > 0$ is small enough we have
\begin{equation}\label{Eq.isole}
\beta_\rho = \partial_\infty ^f M \cap \pain \Pi_\rho^+
 \cap \big(\pain \h^2 \times [t_0 -\varepsilon,
t_0 +\varepsilon] \big),
\end{equation}
for any $\rho \geq \rho_\varepsilon$.

For any $\rho > \rho_0$, we denote by $M_\rho$ the union of the connected
components $M^\prime$ of $M\cap~ \Pi_\rho^+$ such that its finite asymptotic
boundary meets $\beta_\rho$, that is
$\pain^f M^\prime \cap \beta_\rho \not=\emptyset$. Therefore we have
\begin{itemize}
 \item $M_\rho \subset \Pi_\rho^+$,

\item  $\pain^f M_\rho = \beta_\rho$

\item $M_{\rho_2} \subset M_{\rho_1}$ if
$ \rho_0 <  \rho_1 <  \rho_2$.
\end{itemize}


We claim that there exists $\rho^\prime (\varepsilon) > \rho_\varepsilon$ such
that
\begin{equation*}
 M_{\rho^\prime (\varepsilon)}  \subset \h^2 \times
 (t_0-\varepsilon, t_0 + \varepsilon).
\end{equation*}

Indeed, otherwise there would exist a strictly increasing sequence
$(\rho_n)$ such that

\begin{itemize}

 \item $\rho_n > \rho _\varepsilon$ for any $n$ and $\rho_n \to +\infty$,

 \item for any $n$, 
 $M_{\rho_n}$ intersects
 $\big( \h^2 \times \{t_0+ \varepsilon \}\big) \cap \Pi_{\rho_n}^+$, or
 $\big( \h^2 \times \{t_0- \varepsilon \}\big) \cap \Pi_{\rho_n}^+$,
at some point $(y_n, t_0 \pm \varepsilon)$.

\end{itemize}

Observe that by construction we have $y_n \to x_\infty$.
Letting $n$ going to $+\infty$ we obtain that the asymptotic point
$(x_\infty, t_0 \pm \varepsilon)$ belongs to the finite asymptotic boundary
of $M$, which gives a contradiction  with (\ref{Eq.projection}) and
(\ref{Eq.isole}), with $\rho = \rho_\varepsilon$,
and the assumption that $p_\infty$ does not belong to a vertical segment
contained in $\alpha_0$.

Now we set
\begin{equation*}
 P_\varepsilon:= \Pi_{\rho^\prime(\varepsilon)},\  S_\varepsilon:=
 M_{\rho^\prime (\varepsilon)},\ 
 \text{and} \ \alpha_\varepsilon:= \beta_{\rho^\prime(\varepsilon)},
\end{equation*}
We have just seen that
\begin{itemize}
\item  $S_\varepsilon \subset \h^2 \times
 (t_0 -\varepsilon, t_0 +\varepsilon)$,

\item  $\pain^f S_\varepsilon = \alpha_\varepsilon$,

\item $\pi (\alpha_\varepsilon)=\pi (P_\varepsilon^+ )$,
\end{itemize}
therefore we have $\pain S_\varepsilon= \alpha_\varepsilon$.
 Since $p_\infty$ does not belong to the asymptotic boundary of
$\partial M$, we can choose $\rho_\varepsilon$ so large that for any
$\rho > \rho_\varepsilon$ we have

\begin{equation*}
 \partial M \cap \Pi_\rho^+ \cap
 \Big( \h^2 \times
 (t_0 -\varepsilon, t_0 +\varepsilon)\Big) =\emptyset.
\end{equation*}

Therefore we get that $\partial S_\varepsilon \cap P_\varepsilon^+=\emptyset$
and then $\partial S_\varepsilon \subset P_\varepsilon$. This proves
Claims \ref{claim.1}-\ref{claim.5}.

\medskip

Now we suppose that $M$ is not contained in  $\h^2 \times \{t_0\}$.

Let $\varepsilon >0$ be such that Claim (\ref{item.limit}) does not hold.
Then there exists
a strictly decreasing positive sequence $(\varepsilon_n)$ such that $\varepsilon_n \to 0$ and
$P_{\varepsilon_n}^+$ is not contained in $P_\varepsilon^+$.
 Recall that, for any $n$, we have
$P_{\varepsilon_n}^+ = \Pi_{\rho_n}^+$ for some $\rho_n >0$. Terefore, if
$P_{\varepsilon_n}^+$ is not contained in $P_\varepsilon^+$ we obtain that
 $P_\varepsilon^+$ is contained in $P_{\varepsilon_n}^+$, for any $n$.
Thus $S_{\varepsilon}\subset S_{\varepsilon_n}$,
and consequently $S_\varepsilon \subset \h^2 \times [t_0-\varepsilon_n, t_0+\varepsilon_n]$ for
any $n$.
Letting $n$ going to $+\infty$ we get $S_\varepsilon\subset \h^2 \times \{t_0\}$. By the
 analytic continuation property,  we get that  $M\subset \h^2 \times \{t_0\}$,
  which leads to a contradiction.

  Notice that the same argument shows that,
if $M$ is not contained in  $\h^2 \times \{t_0\}$,
  $P_\varepsilon^+$ goes to infinity as $\varepsilon$ goes to zero, that is
  $\rho^\prime (\varepsilon) \to +\infty$ if $\varepsilon \to 0$.
   Therefore we
get that $\bigcap_{\varepsilon >0} P_\varepsilon^+ = \emptyset$. This
accomplishes the proof of the Lemma.
\end{proof}

\

In our context, it is natural to expect that the area of a minimal surface $M$ in
$\h^2 \times \R$ is infinite. More precisely, we derive the following result.

\

\begin{lemma}\label{Area Lemma}
Let $M $ be a minimal surface immersed  into $\h^2\times\R$. Assume that the finite
asymptotic boundary of the boundary of $M$ is not equal to the  finite
asymptotic boundary  of the boundary of $M$, that is
  $\partial_\infty^f (\partial M) \not= \partial_\infty^f M$.
  Then $M$ has infinite area.

\end{lemma}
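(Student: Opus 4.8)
The plan is to exploit the single boundary point of $M$ at infinity that is not shared by $\partial M$, and to harvest a definite amount of area from $M$ near each of infinitely many well-separated points of $M$ converging to it. First I would observe that the hypothesis is equivalent to the existence of a point $p_\infty=(x_\infty,t_0)\in \partial_\infty^f M\setminus \partial_\infty^f(\partial M)$, with $x_\infty\in\pain\h^2$ and $t_0\in\R$. Indeed, since $\partial M\subset\overline{M}$, a diagonal argument shows $\partial_\infty^f(\partial M)\subseteq\partial_\infty^f M$, so the assumed inequality forces the inclusion to be strict and yields such a $p_\infty$. By the definition of $\partial_\infty^f M$ (Definition~\ref{D.asymptotic}), I fix a sequence $(q_k)$ in $M$ with $q_k\to p_\infty$.

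The key step is a separation estimate: I claim there is $r_0>0$ with $\dist(q_k,\partial M)\geq 2r_0$ for all large $k$. I would argue by contradiction. If $\liminf_k \dist(q_k,\partial M)=0$, then after passing to a subsequence there are $w_k\in\partial M$ with $\dist(q_k,w_k)\to 0$. Writing $q_k=(x_k,t_k)$ and $w_k=(x_k',t_k')$, the product metric gives $d_{\h^2}(x_k,x_k')\to 0$ and $|t_k-t_k'|\to 0$; since $x_k\to x_\infty$ (an ideal point) and $t_k\to t_0$, this forces $x_k'\to x_\infty$ and $t_k'\to t_0$, that is $w_k\to p_\infty$. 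As $w_k\in\partial M$ this gives $p_\infty\in\partial_\infty^f(\partial M)$, a contradiction. In particular, for all large $k$ the extrinsic ball $B(q_k,r_0)$ contains no point of $\partial M$, so on $B(q_k,r_0)$ the surface $M$ is minimal and boundaryless, passing through the interior point $q_k$.

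Next I invoke the monotonicity formula. Since $\h^2\times\R$ is a Hadamard manifold (it is CAT$(0)$, being the product of the CAT$(-1)$ plane $\h^2$ with $\R$), any minimal surface through $q_k$ whose boundary avoids $B(q_k,r_0)$ satisfies $\area\big(M\cap B(q_k,r_0)\big)\geq \pi r_0^2$; in the immersed case one applies this to the connected component of the preimage of $B(q_k,r_0)$ containing a preimage of $q_k$, whose abstract boundary lies over $\partial B(q_k,r_0)$ because $\partial M\cap B(q_k,r_0)=\emptyset$. Finally, because $q_k\to p_\infty$ is a divergent sequence, $\dist(o,q_k)\to+\infty$ for a fixed base point $o\in\h^2\times\R$; I pass to a subsequence with $\dist(o,q_{k_{i+1}})>\dist(o,q_{k_i})+2r_0$, so that $\dist(q_{k_i},q_{k_j})>2r_0$ for $i\neq j$ and the balls $B(q_{k_i},r_0)$ are pairwise disjoint. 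Summing the lower bounds over the disjoint balls gives $\area(M)\geq\sum_i \area\big(M\cap B(q_{k_i},r_0)\big)\geq \sum_i \pi r_0^2=+\infty$.

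The main obstacle is the separation estimate producing a uniform $r_0$, i.e. guaranteeing that the balls $B(q_k,r_0)$ stay clear of $\partial M$; this is exactly where the hypothesis $p_\infty\notin\partial_\infty^f(\partial M)$ is used, through the fact that a sequence of $\partial M$ at vanishing distance from $q_k$ would have to converge to $p_\infty$. Once this is secured, the nonpositive curvature of $\h^2\times\R$ gives the clean monotonicity lower bound $\pi r_0^2$ with no small-radius threshold, and the disjointness of the balls makes the area sum diverge at once. Some care is still needed to state the monotonicity inequality correctly for the immersed (rather than embedded) surface and to confirm that $\h^2\times\R$ carries the nonpositive curvature used for the $\pi r^2$ bound.
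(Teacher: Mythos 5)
Your proof is correct, and its skeleton is the same as the paper's: extract $p_\infty\in\partial_\infty^f M\setminus\partial_\infty^f(\partial M)$, take a sequence of points of $M$ converging to $p_\infty$, show they stay uniformly away from $\partial M$, surround them by pairwise disjoint discs, and sum a uniform area lower bound over the discs. Where you genuinely diverge is in the key ingredient that produces the uniform lower bound. The paper works with \emph{intrinsic} geodesic discs $D(p_n,\delta)\subset M$ and quotes a theorem of Frensel (on surfaces of bounded mean curvature in ambient spaces of bounded geometry) to get a fixed $\alpha>0$ with $\area(D(p_n,\delta))>\alpha$; you instead work with \emph{extrinsic} balls $B(q_k,r_0)$ and use the monotonicity formula for minimal surfaces in a Hadamard manifold, which applies since $\hd\times\R$ is complete, simply connected and has sectional curvature in $[-1,0]$, yielding the explicit bound $\area\bigl(M\cap B(q_k,r_0)\bigr)\geq \pi r_0^2$. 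Your route is more self-contained and gives a sharp constant, at the cost of needing the nonpositive curvature and simple connectivity of the ambient space (and the stationary-varifold formulation to cover immersed surfaces); the paper's route via Frensel is insensitive to the sign of the ambient curvature and is phrased intrinsically, so it would survive in ambient spaces where the $\pi r^2$ comparison fails. Note also that your separation estimate, proved in the extrinsic distance, is formally what justifies the paper's corresponding assertion $d_M(p_n,\partial M)>2\delta$ (stated there without proof), since the intrinsic distance dominates the extrinsic one; so your write-up actually fills in that step.
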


\begin{proof}
By assumption there exists a  finite asymptotic point $p_\infty$ of $M$ which
is  not an asymptotic point of the boundary of $M$:
$p\in \partial_\infty^f M \setminus \partial_\infty^f (\partial M)$.

Let $(p_n)$ be a sequence of points of $M$ which converges to $p_\infty$,
  see Definition \ref{D.asymptotic}.
 Let $\delta >0$ be  a fixed real number.
Then, since $p_\infty$ is not an asymptotic boundary point of $\partial M$,
there exists $n_0 \in \N$ such that $d_M(p_n, \partial M) >2 \delta$ for any
$n \geq n_0$, where $d_M$ means the intrinsic distance on $M$.

\smallskip

For each $n$, let $D(p_n, \delta) \subset M$ be the geodesic disc on $M$
centered at $p_n$ and with radius $\delta$. Then for any $n \geq n_0$ we have
$D(p_n, \delta) \cap \partial M =\emptyset$.

Furthermore, up to extracting a subsequence, we can assume that there exists
$n_1 \in \N$, $n_1 > n_0$, such that
$D(p_n, \delta) \cap D(p_m, \delta) =\emptyset$ for any $m,n \geq n_1$.

In an other hand, a result of K. Frensel \cite[Theorem 3 and Remark 4]{Frensel}
states that there exists a fixed real
number
$\alpha >0$ such that $\text{Area}\,( D(p_n, \delta)) >\alpha$ for any $n \geq n_1$.
We conclude that $M$ has infinite area.
\end{proof}

\begin{remark}
 The assumption on the asymptotic boundary in Lemma \ref{Area Lemma} is
crucial as we can see from the following examples.
\begin{enumerate}
 \item A geodesic triangle in $\h^2$ with one, or more, vertices
in the asymptotic boundary of $\h^2$ has finite area
\cite[Lemme 2.5.23 and Th\'eor\`eme. 2.5.24]{Livre}.
We observe that
the asymptotic boundary of the triangle is equal to
the asymptotic boundary of its boundary.

\smallskip

\item We can construct
a domain in $\h^2$ with finite area and whose the asymptotic boundary is the
whole $\pain \h^2$.

\smallskip

Indeed, consider the Poincar\'e disc model of $\h^2$.
For any $\rho >0$ we denote by $S_\rho \subset \h^2$ the circle centered at $0$
with radius $\rho$.

Let $(\rho_n)$ be a strictly increasing  sequence of positive real numbers such
that $\rho_n \to +\infty$. Now we consider another sequence of positive real
numbers $(\rho_n^\prime)$, $\rho_n^\prime >\rho_n$, such that, calling $A_n$
the open annulus bounded by the circles $S_{\rho_n}$ and  $S_{\rho_n^\prime}$,
we have:
\begin{itemize}
 \item the annuli $A_n$ are mutually disjoint,

 \item $\text{Area}\, (A_n) < \frac{1}{n^2}$  for any $n\in \Ne$.
\end{itemize}
Let $y_0 \in \h^2$, $y_0\not=0$, be any point on the imaginary axis such that
its hyperbolic distance to $0$ is lesser that $\rho_1/2$. We call $T$ the
open geodesic triangle with vertices $0, y_0$ and $1$, observe
that this last vertex is the unique vertex
of $T$ belonging to the asymptotic boundary of $\h^2$.

 Then we set $U:= T\cup \bigcup_{n\geq 1} A_n$. By construction $U$
is a domain of $\h^2$ satisfying:
\begin{itemize}
\item $\text{Area}\, (U)$ is finite, (since $\text{Area }(T)$ is finite
\cite[Lemme 2.5.23]{Livre}).

\item $\pain U= \pain \h^2$ and  also $\pain (\partial U)= \pain \h^2$. In
particular the asymptotic boundary of $U$ is equal to the asymptotic
boundary of its boundary.
\end{itemize}

\smallskip

Observe that the domain $U$ is infinitely connected. We can modify
slightly $U$ in
order to obtain a simply connected domain. For that we consider a fixed point
$x_0 \in \h^2$, $x_0 \not= 0$, on the real axis such that its hyperbolic
distance to $0$ is lesser that $\rho_1/2$. We call $\Sigma^+$
(resp. $\Sigma^-$) the closed geodesic triangle in $\h^2$ with vertices
$0, x_0$ and $i$ (resp. $0, x_0$ and $-i$). We set
\begin{equation*}
 \Omega := T \cup \big(\bigcup_{k\geq 1} A_{2k}\setminus \Sigma^+\big)
\cup
 \big(\bigcup_{k\geq 1} A_{2k-1}\setminus \Sigma^-\big).
\end{equation*}
We have by construction $\Omega \subset U$, therefore $\Omega$ has finite area.
Furthermore:
\begin{itemize}
\item $\Omega$ is a simply connected domain,

\item  $\pain \Omega= \pain \h^2$ and  also
$\pain (\partial \Omega)= \pain \h^2$. In
particular the asymptotic boundary of $\Omega$ is equal to the asymptotic
boundary of its boundary.
\end{itemize}

\end{enumerate}

\end{remark}

\section{Main Theorem}\label{M. theorem}

\begin{theorem}\label{Main theorem}

Let $M $ be a connected and  oriented  minimal surface immersed in
$\h^2\times~\R$
$($not necessarily complete$)$. As in \lemref{L.Main Lemma} we assume that
 \begin{enumerate}

\item
The finite asymptotic boundary of $M$
is composed of an arc
$\alpha$  properly embedded in  $\pain \h^2 \times \R$.

\vskip1.5mm
\item
There exists an open and simple arc $\alpha_0 \subset \alpha$ in
$\pain^f M \setminus \pain (\partial M)$ which is not contained in a vertical
line.
\end{enumerate}
 Assume further that $M$ is stable.
 Then $M$ has infinite total curvature.

 \smallskip

  Moreover, let $p_\infty :=(x_\infty, t_0)\in \pain \h^2 \times \R$, be any
point of
$\alpha_0$ which does not belong to a vertical segment contained in
$\alpha_0$. Then
$\vert n_3(p)\vert \to 1$ if
$p \to p_\infty$, $p\in M$, where  $n_3$ is the third coordinate of the
Gauss map of $M$.

\end{theorem}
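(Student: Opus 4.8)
The plan is to reduce the statement to two ingredients: a pointwise lower bound for the curvature integrand coming from the Gauss equation, and the asymptotic verticality of the normal $|n_3|\to1$, which is itself the ``Moreover'' assertion and will also drive the area estimate. For a minimal surface in $\h^2\times\R$ the Gauss equation reads $K=-n_3^2-\tfrac12|A|^2$, where $A$ is the second fundamental form; in particular $K\le0$ and
\[
 |K|=n_3^2+\tfrac12|A|^2\ge n_3^2 .
\]
So it will suffice to exhibit a subset of $M$ of infinite area on which $n_3^2$ is bounded below. I will also use that the height function $h:=t|_M$ is harmonic on $M$ (since $\partial_t$ is parallel and $M$ is minimal) and that $|\nabla_M h|^2=1-n_3^2$, so that $|n_3|\to1$ is equivalent to $|\nabla_M h|\to0$.

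First I would prove $|n_3(p)|\to1$ as $p\to p_\infty$. Applying \lemref{L.Main Lemma} at $p_\infty$ gives, for each small $\varepsilon>0$, the region $S_\varepsilon=M\cap P_\varepsilon^+$ with $S_\varepsilon\subset\h^2\times(t_0-\varepsilon,t_0+\varepsilon)$, $\partial S_\varepsilon\subset P_\varepsilon$, and $p_\infty\in\alpha_\varepsilon=\pain S_\varepsilon$ while $p_\infty\notin\pain P_\varepsilon$. (If $M\subset\h^2\times\{t_0\}$ the theorem is immediate: then $K\equiv-1$, $|n_3|\equiv1$, and $M$ has infinite area by \lemref{Area Lemma}; so I assume $M$ is not a slice.) On $S_\varepsilon$ the harmonic function $h$ has oscillation at most $2\varepsilon$. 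Here stability enters: the interior curvature estimate for stable minimal surfaces in a $3$-manifold of bounded geometry yields a uniform bound $|A|\le C$ on $\{q\in M:d_M(q,\partial M)\ge1\}$; hence there $|K|\le1+\tfrac12C^2$ and the injectivity radius is bounded below, so $M$ has uniformly bounded geometry away from $\partial M$. Since $p_\infty\notin\pain(\partial M)$ and $p_\infty\notin\pain P_\varepsilon$, any $p$ close enough to $p_\infty$ satisfies $d_M(p,\partial M)\ge1$ and $d_M(p,\partial S_\varepsilon)\ge1$, so that the intrinsic ball $B_1(p)$ is contained in $S_\varepsilon$. The standard gradient estimate for harmonic functions on a surface of bounded geometry then gives $|\nabla_M h(p)|\le C'\operatorname{osc}_{B_1(p)}h\le 2C'\varepsilon$, whence $1-n_3(p)^2\le(2C'\varepsilon)^2$. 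Letting $p\to p_\infty$ while $\varepsilon\to0$ yields $|n_3(p)|\to1$.

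With asymptotic verticality established, I would fix $\varepsilon$ so small that the above gives $n_3^2\ge\tfrac12$ on the open set
\[
 \Omega:=\{\,p\in S_\varepsilon:\ d_M(p,\partial S_\varepsilon)>1\ \text{and}\ d_M(p,\partial M)>1\,\}.
\]
Now $p_\infty\in\pain^f\Omega$, since a sequence $p_n\to p_\infty$ in $M$ eventually lies in $\Omega$. On the other hand $p_\infty\notin\pain^f(\partial\Omega)$: a point of $\partial\Omega$ is at intrinsic distance exactly $1$ from $\partial S_\varepsilon\subset P_\varepsilon$ or from $\partial M$, and neither $\pain P_\varepsilon$ nor $\pain(\partial M)$ contains $p_\infty$ (by Claim~\ref{claim.4} of \lemref{L.Main Lemma} and hypothesis (2)), so no sequence in $\partial\Omega$ can converge to $p_\infty$. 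Hence $\pain^f(\partial\Omega)\ne\pain^f\Omega$, and \lemref{Area Lemma} applied to $\Omega$ forces $\operatorname{Area}(\Omega)=\infty$. Therefore
\[
 \int_M|K|\,dA\ \ge\ \int_\Omega n_3^2\,dA\ \ge\ \tfrac12\operatorname{Area}(\Omega)=\infty,
\]
so $M$ has infinite total curvature.

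The main obstacle is the second paragraph: one needs \emph{genuinely uniform} local control of the geometry of $M$ near $p_\infty$ to pass from smallness of the oscillation of $h$ to smallness of its gradient. This is precisely where stability is indispensable, via the interior curvature estimate for stable minimal surfaces; without it a bounded harmonic function of small oscillation could still have large gradient on regions where the surface is geometrically degenerating. Everything else — the Gauss equation, the harmonicity of $h$, and the two invocations of \lemref{Area Lemma} — is routine once that estimate is in place.
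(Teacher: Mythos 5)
Your proposal is correct, and it reaches the theorem by the same two-part skeleton as the paper (a claim that $|n_3|\to 1$ on the regions $S_\varepsilon$ produced by \lemref{L.Main Lemma}, then the Gauss equation combined with \lemref{Area Lemma}), but your proof of the central claim is genuinely different. The paper argues by contradiction: it picks points $p_n\in S_{1/n}$ with $|n_3(p_n)|\le c$, applies horizontal translations $T_n$ carrying them to the origin, uses the Rosenberg--Souam--Toubiana curvature estimates for stable surfaces to write a fixed-size piece of each translated surface as a Euclidean graph with controlled gradient over a disc in its tangent plane (together with Colding--Minicozzi's Lemma 2.4 and a comparison of the product and Euclidean normals), and then gets a contradiction because a graph of fixed size over a uniformly tilted plane cannot fit inside the slab $\h^2\times(t_0-\tfrac{1}{n_0},t_0+\tfrac{1}{n_0})$. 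You instead argue directly: the height function is harmonic with $|\nabla_M h|^2=1-n_3^2$ and oscillation at most $2\varepsilon$ on $S_\varepsilon$, and the stability-derived bound on $|A|$ (hence on $|K|$, via the same Gauss equation) feeds a Cheng--Yau type gradient estimate to give $1-n_3^2\le C\varepsilon^2$ at points whose unit intrinsic ball lies in $S_\varepsilon$ away from $\partial M$. Both routes consume stability through exactly the same a priori bound on the second fundamental form (consistent with the paper's remark that stability is used only for this); yours is shorter, quantitative, and avoids the blow-up/graph machinery, at the price of importing the Cheng--Yau estimate, which requires the Ricci lower bound on the whole ball and its relative compactness. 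Since your estimate is interior-only, you correctly compensate by applying \lemref{Area Lemma} to the shrunk region $\Omega$ rather than to $S_\varepsilon$ itself, as the paper does; verifying $p_\infty\in\pain^f\Omega\setminus\pain^f(\partial\Omega)$ is the right way to do this.

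Three small repairs. (i) To have the curvature bound on all of $B_1(p)$ you need $d_M(p,\partial M)\ge 2$ (or work with balls of radius $\tfrac12$), not $\ge 1$. (ii) The injectivity-radius remark is unnecessary (Cheng--Yau needs no injectivity radius bound) and is not obviously justified from a bound on $|A|$ alone; drop it. (iii) Your justification that points near $p_\infty$ are intrinsically far from $\partial M$ needs one more line: the \emph{ambient} distance to $\partial M$ need not become large, since the Lemma only excludes $\partial M$ from $P_\varepsilon^+\cap\bigl(\h^2\times(t_0-\varepsilon,t_0+\varepsilon)\bigr)$, so $\partial M$ may pass just above or below the slab. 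The correct argument is that any path in $M$ from a point $p\in S_\varepsilon$ to $\partial M$ must first cross $\partial S_\varepsilon\subset P_\varepsilon$, whence $d_M(p,\partial M)\ge d_{\mathrm{amb}}(p,P_\varepsilon)\to\infty$ as $p\to p_\infty$; the same observation is what makes your claim that $\pain^f(\partial\Omega)$ misses $p_\infty$ airtight.
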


\begin{proof}

Observe that, taking into account Lemma \ref{Area Lemma}, if $M$ is contained
in a slice $\hi2 \times \{t_0\}$ then there is nothing to prove. Thus from now
we assume that $M$ is not contained in a slice.

We recall the Gauss equation of the immersion (see \cite[Lemma 4]{HST}):
\begin{equation}\label{Eq.Gauss}
K =K_{\text{ext}}  - n_3^2,
\end{equation}

where $K$ is the Gaussian
curvature and $K_{\text{ext}}$ is the extrinsic curvature.

Since $M$ is a minimal surface, we have $K \leq -n_3^2$.

\

By assumption, the finite asymptotic boundary of $M$ is an arc $\alpha$, and
there exists a simple arc
 $\alpha_0 \subset \pain^f M \setminus \pain (\partial M)$,
 $\alpha_0 \subset \alpha$,
which is not contained in a vertical line.
  Let $p_\infty :=(x_\infty, t_0) \in  \alpha_0$ as
in the statement.

For any $\varepsilon >0$ we consider the minimal surface
$S_\varepsilon \subset M$ given by
\lemref{L.Main Lemma}.

\

\noindent {\bf Claim} {\em For any real number  $c\in (0,1)$,
there exists
$\varepsilon > 0$ such that $\vert n_3 (p) \vert > c$
for any $p\in S_\varepsilon$. Consequently,
\begin{equation} \label{Eq.n3}
\vert n_3 (p) \vert \to 1, \ \text{if}\ p \to p_\infty,\ p\in M.
\end{equation}
}

Let us assume momentarily that the Claim holds.

Using  the Claim above and the Gauss equation (\ref{Eq.Gauss}), we have
\begin{equation*}
\int_M K\, dA \leq \int_{ S_\varepsilon} K\, dA \leq -c^2 \text{Area} (S_\varepsilon).
\end{equation*}
By combining with Lemma \ref{Area Lemma}, we deduce therefore
that $M$ has
infinite total curvature, as desired. Thus it remains to prove the Claim.

\

\noindent {\em Proof of the Claim.}

Assume, by contradiction, that the Claim does not hold. Then, there exists a
fixed number $c\in (0,1)$ such that for any $n \in \Ne$ there is a point
$p_n \in S_{1/n}$ satisfying
\begin{equation}\label{Eq.constante}
\vert n_3 (p_n)\vert \leq c
\end{equation}
It follows from Lemma \ref{L.Main Lemma}-(\ref{item.limit})
(or from its proof),
  that $p_n \to p_\infty$ as $n\to \infty$, see Definition \ref{D.asymptotic}.

\smallskip

 Let $n_0 \in \Ne$ be a  positive integer. We have $p_n \in S_{\frac{1}{n_0}}$
 for any integer $n \geq n_0$  large enough.  Therefore, up to
extracting a subsequence, we
 can assume that   for any  $ n \in \Ne$, $n > n_0$, we have
 $d_M (p_n, \partial S_{\frac{1}{n_0}}) > 1$,
where $d_M$ is the intrinsic
 distance on $M$.

\smallskip

From now on we consider the Poincar\'e disc model of $\h^2$. Letting
$p_n := (x_n, t_n) \in \h^2 \times \R$, for any $n> n_0$ we denote by $T_n$
the  hyperbolic translation on $\h^2$ along the geodesic passing through $x_n$
and $0$, such that $T_n (x_n)=0$. We also denote by $T_n$ the horizontal
translation of $\h^2 \times \R$ induced by this isometry of $\h^2$.

\smallskip

Now we proceed as in the proof of \cite[Theorem 2.5]{SE-T3}.

Observe that for any $n >n_0$ the translated surface
$T_n \left(S_{\frac{1}{n_0}}\right)$ is stable and oriented. We deduce from
\cite[Main Theorem]{RST} that, far away from the boundary,
 we have  uniform a priori upper estimates
of the norm of the second fundamental form of
$T_n\left(S_{\frac{1}{n_0}}\right)$.

\smallskip

We consider $\hi 2\times\R$ as an open set of Euclidean space $\R^3$.
We deduce from  \cite[Proposition 2.3]{RST} and from
\cite[Proposition A.1]{SE-T3},
that there exists a real number
$\delta >0$, which does not depend on $n$, or on $n_0$,
such that for any
$n>n_0$, a part $ \Sigma_{n}$ of
$T_n \left(S_{\frac{1}{n_0}}\right)$
is the Euclidean graph
of a function $u$ defined on the
disc centered at point $T_n(p_n)$ with Euclidean radius $\delta$ in the
tangent plane of $ \Sigma_{n}$ at $T_n(p_n)$.
Furthermore, the norm of the Euclidean
gradient of the
function $u$ is bounded above by $1$.

\smallskip

As a matter of fact, from the discussion
after the proof of Lemma 2.4 in \cite{C-M}, we get the following.
\begin{align}\label{Eq.Fact}
\text{Fact: }& \text{\em
for any
$r \in (0,1)$ there exists $\delta (r) \in (0,\delta)$ such that the norm of
the gradient} \notag \\
&\text{\em of the function $u$ is bounded above by $r$  on the disc of Euclidean
radius $\delta (r)$  }
\end{align}
Observe that we can use \cite[Lemma 2.4]{C-M} since we have
a priori estimates for the norm of the Euclidean second fundamental form.
Those estimates follow from \cite[Proposition A.1]{SE-T3}.

\medskip

Observe that, since
$d_M \left(T_n(p_n), \partial T_n\left(S_{\frac{1}{n_0}}\right)\right) > 1$
for any $n > n_0 >0$, the constant $\delta$ can be chosen so that
$\Sigma_n \cap \partial T_n\left(S_{\frac{1}{n_0}}\right)=\emptyset$.

\smallskip

Let  $\nu_n$ be the unitary normal along $T_n\left(S_{\frac{1}{n_0}}\right)$
in the Euclidean metric.
We denote by  $\nu_{n,3}$ the vertical component of $\nu_n$. Recall that
$|n_3 (p_n)|< c$ for any $n >n_0>0$,    see (\ref{Eq.constante}).
Comparing the product metric  of $\hi2 \times\R$ with the Euclidean metric,
it can be shown that there exists $c^\prime \in (0,1)$, which does not
depend on $n$ or $n_0$, such that
$|\nu_{n,3}   (T(p_n))|< c^\prime$ for any $n>n_0>0$,
(see the formula of the unit normal vector field of a vertical graph in  the
proof of \cite[Proposition 3.2]{SE-T4}).

\bigskip

This implies that the tangent planes of $\Sigma_n$ at points $T(p_n)$ have
 a Euclidean slope
bounded from below  uniformly (with respect to $n>n_0$).

We recall that $S_{\frac{1}{n_0}}\subset \h^2 \times
 (t_0- \frac{1}{n_0}, t_0 + \frac{1}{n_0}) $, see Lemma
\ref{L.Main Lemma}, thus the same occurs
 for any $\Sigma_n$. We infer therefore a contradiction with the fact
(\ref{Eq.Fact}) above
 since then, for $n_0$ large enough and
 $n > n_0$, the surface $\Sigma_n$ would intersect
  $\h^2 \times \{t_0 \pm \frac{1}{n_0}\}$.
\end{proof}




\bigskip

\begin{remark}
 As a matter of fact, in Theorem \ref{Main theorem} the stability assumption is
only used to ensure a priori estimates for the second fundamental form of $M$.
We think that stability is a hypothesis  simpler to handle  than bounded
second fundamental form since, for example,
any vertical or horizontal minimal graph is stable.
\end{remark}

\bigskip

\begin{remark}
Given a bounded function g on
 $\partial_\infty \hip^2 \times \{0\}$,
continuous except perhaps at a finite set of points $S,$ there exists
 a minimal entire extension $u$ of $g$ \cite[Corollary 4.1, Remark 4
(2)]{SE-T2}.
We remark that the problem of Dirichlet at infinity ($g$ is continuous) was
solved by B. Nelli and H. Rosenberg \cite{N-R0}, \cite{N-R}.

\thmref{M. theorem} ensures that  these entire
graphs have infinite total curvature.

 However, the fact that all non trivial ($g\not\equiv \text{cst}$)
such
entire graphs have infinite total curvature, follows directly from Huber theorem
\cite[Theorem 15]{Hu}, see also \cite[Th\'eor\`eme 2.~4.~10]{eBook}:  In
fact, a complete simply connected minimal
surface
immersed into $\hip^2 \times \R$ of finite  total curvature  is conformally
equivalent to $\Cx$.

 On the other hand, it is well-known that the height function of a
minimal surface $M$ conformally immersed into $\hip^2 \times \R$ is a  harmonic
function on $M,$ see for instance \cite[
Proposition 7]{SE-T1}.  Thus, by combining this  two facts we derive that the
finite total curvature assumption  would lead to a contradiction, because there
is no non constant bounded harmonic
function over $\Cx.$
\end{remark}

\begin{corollary}\label{C.arc}
Let $M$ be a minimal graph in $\hip^2 \times \R$
such that its finite asymptotic boundary
is a
graph over an  arc of $\partial_{\infty}\hi2 \times \{0\}$ and
is different from the asymptotic boundary of $\partial M$.
Then $M$ has infinite total curvature.

Furthermore, for any interior point
$p_\infty$ of $ \pain^f M $ such that
$p_\infty \notin  \pain^f (\partial M)$, we have
$\vert n_3(p)\vert \to 1$ if
$p \to p_\infty$, $p\in M$.

\end{corollary}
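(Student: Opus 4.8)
The plan is to deduce the statement directly from \thmref{Main theorem}; the only work is to verify that its hypotheses hold for a minimal graph. First I would record that $M$ is stable: as recalled in the Introduction, a vertical graph transverse to the foliating vertical geodesics (or a horizontal graph transverse to the foliating equidistant curves) carries a positive solution of $\mathcal L u = 0$, so by the classical criterion \cite[Lemma 1.36]{C-M} it is stable. Moreover, being a graph, $M$ is connected and naturally oriented by its graph normal, so the standing assumptions of \thmref{Main theorem} on $M$ itself are in place.

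Next I would check the two conditions on the asymptotic boundary. By hypothesis $\pain^f M$ is a graph over an arc $I\subset \pain\h^2\times\{0\}$, say $\pain^f M=\{(x_\infty,f(x_\infty)) : x_\infty\in I\}$ for a continuous function $f$ on $I$. The graph of a continuous function over an arc is a simple arc, properly embedded in $\pain\h^2\times\R$; taking $\alpha:=\pain^f M$ this gives assumption (1). The first projection $\pi\colon \pain\h^2\times\R\to\pain\h^2$ then restricts to a homeomorphism of $\alpha$ onto $I$, a fact I will use repeatedly below.

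For assumption (2) I would use that $\pain^f M$ differs from the asymptotic boundary of $\partial M$: there is a point $p_\infty=(x_\infty,t_0)\in \pain^f M\setminus\pain(\partial M)$. Since $\pain(\partial M)$ is closed in $\pain(\h^2\times\R)$, a sufficiently small open subarc $\alpha_0\subset\alpha$ around $p_\infty$ still lies in $\pain^f M\setminus\pain(\partial M)$. As $\pi$ maps $\alpha_0$ homeomorphically onto a nondegenerate subarc of $I$, the subarc $\alpha_0$ is not contained in a vertical line, which is exactly assumption (2). With both hypotheses verified, \thmref{Main theorem} applies and yields that $M$ has infinite total curvature.

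Finally, for the pointwise statement I would observe that, because $\pain^f M$ is a graph over $I$, each $x_\infty\in I$ carries a single height $f(x_\infty)$, so $\pain^f M$ contains no vertical segment whatsoever. Hence every interior point $p_\infty$ of $\pain^f M$ with $p_\infty\notin\pain^f(\partial M)$ is a point of $\alpha_0$ that does not belong to any vertical segment contained in $\alpha_0$, and the ``moreover'' part of \thmref{Main theorem} gives $\vert n_3(p)\vert\to 1$ as $p\to p_\infty$, $p\in M$. The only delicate point, and the one I would treat with care, is the passage from ``graph over an arc'' to ``simple arc $\alpha$ properly embedded in $\pain\h^2\times\R$'', i.e.\ the continuity and properness of $f$; once this is granted, the corollary is a direct specialization of the theorem.
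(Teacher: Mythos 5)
Your proposal is correct and follows exactly the route the paper intends: the corollary is stated without proof as an immediate specialization of \thmref{Main theorem}, and your verification (stability of a graph via the criterion recalled in the Introduction, the graph hypothesis giving a properly embedded simple arc with no vertical segments, and closedness of $\pain(\partial M)$ giving the open subarc $\alpha_0$) is precisely the intended argument. Your flag about continuity of the asymptotic graph is well placed — the paper itself writes ``$($continuous$)$ graph'' in \coref{C. end}, confirming that continuity is the tacit reading of the hypothesis here as well.
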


\begin{corollary}\label{C. end}

Let $M$ be an oriented stable minimal surface immersed into $\hip^2 \times \R$
with compact boundary (e.g. a minimal graph with compact boundary),
whose  asymptotic boundary
is a  $($continuous$)$
graph over  the whole $\partial_{\infty}\hi2 \times \{0\}$. Then $M$ has infinite total curvature.

Furthermore,  if $p_\infty\in  \pain M $,
 we have $\vert n_3(p)\vert \to 1$ if
$p \to p_\infty$, $p\in M$.
\end{corollary}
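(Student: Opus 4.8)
The plan is to reduce the whole statement to \thmref{Main theorem}, so the work consists in extracting its two hypotheses from the compactness of $\partial M$ together with the fact that $\pain M$ is a graph over the entire circle. First I would record the two structural facts. Since $\partial M$ is compact, its closure in $\hi2\times\R$ is compact and therefore has no asymptotic points; hence $\pain(\partial M)=\emptyset$, and in particular $\pain^f(\partial M)=\emptyset$. Next, writing the asymptotic boundary as the graph $\Gamma=\{(x_\infty,g(x_\infty)):x_\infty\in\pain\hi2\}$ of the continuous height function $g$, compactness of $\pain\hi2$ forces $g$ to be bounded, so $\Gamma$ is a closed embedded curve lying at finite height. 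Thus $\pain M=\pain^f M=\Gamma$ with no non-finite asymptotic boundary, and $\pain^f M=\Gamma\neq\emptyset=\pain^f(\partial M)$.

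Second, I would exploit the graph property to secure the non-verticality condition. Because $\Gamma$ is a graph over $\pain\hi2$, every vertical line $\{x_\infty\}\times\R$ meets $\Gamma$ in exactly one point; consequently $\Gamma$ contains no vertical segment, and, $\pi$ being a homeomorphism from $\Gamma$ onto $\pain\hi2$, every proper subarc of $\Gamma$ projects onto a nondegenerate subarc of $\pain\hi2$ and so is not contained in a vertical line. Now I fix an arbitrary $p_\infty=(x_\infty,t_0)\in\pain M=\Gamma$ and choose an open simple subarc $\alpha_0\subset\Gamma$ containing $p_\infty$, not contained in a vertical line. With $\alpha=\Gamma$ in the role of the properly embedded finite asymptotic boundary and this $\alpha_0\subset\pain^f M\setminus\pain(\partial M)$, hypotheses~(1) and~(2) of \thmref{Main theorem} hold, $M$ is stable, and $p_\infty$ lies on no vertical segment of $\alpha_0$. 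The theorem then gives $|n_3(p)|\to 1$ as $p\to p_\infty$; since $p_\infty\in\pain M$ was arbitrary, this settles the furthermore statement.

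For the infinite total curvature I would feed the Claim of \thmref{Main theorem} into the Gauss equation exactly as in its proof: for a fixed $c\in(0,1)$ there is $\varepsilon>0$ with $|n_3|>c$ on $S_\varepsilon$, whence $K\leq -n_3^2<-c^2$ there and $\int_M K\,dA\leq\int_{S_\varepsilon}K\,dA\leq -c^2\,\text{Area}(S_\varepsilon)$. Here $\partial S_\varepsilon\subset P_\varepsilon$ (Claim~\ref{claim.2}), so $\pain^f(\partial S_\varepsilon)\subset\pain P_\varepsilon$, while $\alpha_\varepsilon=\pain^f S_\varepsilon$ contains $p_\infty\notin\pain P_\varepsilon$ (Claim~\ref{claim.4}); thus $\pain^f(\partial S_\varepsilon)\neq\pain^f S_\varepsilon$, and \lemref{Area Lemma} applied to $S_\varepsilon$ yields $\text{Area}(S_\varepsilon)=+\infty$. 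Therefore the total curvature of $M$ is $-\infty$.

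The main obstacle I anticipate is the mismatch between hypothesis~(1) of \thmref{Main theorem}, which literally asks the finite asymptotic boundary to be an \emph{arc}, and the present case where $\Gamma$ is a closed curve. I expect to resolve this by observing that the construction of \lemref{L.Main Lemma} and the Claim are entirely local around $p_\infty$: near $p_\infty$ the closed curve $\Gamma$ is indistinguishable from a properly embedded arc, so the argument applies verbatim. Equivalently, one may delete from $\Gamma$ a single point far from $p_\infty$ to produce a genuine properly embedded arc carrying $\alpha_0$; neither the area estimate nor the normal-convergence estimate sees this deleted point, since both use only the piece $S_\varepsilon$ of $M$ accumulating at the sub-arc $\alpha_\varepsilon\ni p_\infty$.
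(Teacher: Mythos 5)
Your proposal is correct and coincides with the paper's own treatment: the authors state this corollary without a separate proof, as an immediate application of \thmref{Main theorem}, and that is exactly the reduction you carry out (compact boundary gives $\pain (\partial M)=\emptyset$, the graph property excludes vertical segments and supplies the arcs $\alpha_0$, and the proofs of \lemref{L.Main Lemma} and of the Claim in \thmref{Main theorem} use only the local structure of $\pain^f M$ near $p_\infty$ plus the Gauss equation and \lemref{Area Lemma} applied to $S_\varepsilon$, so the closed-curve versus properly-embedded-arc mismatch is harmless). One caveat: of your two fixes for that mismatch, only the locality argument is sound --- deleting a point from $\Gamma$ neither changes $\pain^f M$ (which remains the full closed curve) nor yields a properly embedded arc, since both ends of the punctured curve accumulate at the deleted point, which lies at finite height --- but as this is offered only as an ``equivalently,'' your proof stands as written.
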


\begin{remark}
 The above Corollary applies to a minimal graph with compact boundary whose
asymptotic
boundary is a graph over $\partial_{\infty}\hi2 \times \{0\}$. We refer to
\cite[Theorem 5.1]{SE-T2} for a
existence result of such graphs, when the boundary is a Jordan curve
$C\subset \hip^2 \times \{0\}$ satisfying an ``exterior circle
of radius $\rho$ condition'' .  So, all of these examples have infinite
(intrinsic) total curvature, by applying the  \coref{C. end}.

 In the classical case of the end of  a catenoid,   the above
Corollary
follows
from an explicit computation carry out in \cite[Proof of the Proposition
3.3]{B-SE}.
\end{remark}

\section{A  particular example}\label{remarkable}

We consider the Poincar\'e disc model of $\h^2$. Let
$\theta_0 \in (0,\pi/2)$ be a fixed number.
Let $\gamma \subset \h^2$ be the geodesic with asymptotic boundary
$\{1, e^{i \theta_0}\}$.

 Let $U\subset \h^2$ be the domain whose boundary is the union of the geodesic
rays $[0, 1)$ and $[0,i)$ with the geodesic $\gamma$, whose asymptotic
boundary is the asymptotic arc
$\gamma_\infty:= \{e^{i\theta},\ \theta_0 \leq \theta \leq \pi/2 \}$ of
$\pain \h^2$ with the point $1$.

Let $A >\pi$ be a real number to be chosen  later. We consider the Dirichlet
problem $(P)$ on $U$ with boundary data
\begin{itemize}
 \item $0$ on the geodesic rays $[0, 1)$ and $[0,i)$,

 \item $-A$ on the asymptotic arc $\gamma_\infty$,

\item $+\infty$ on the geodesic $\gamma$.
\end{itemize}

Using \cite[Theorem 4.1]{SE-T2}  and the minimal surface $M_1$
described
in \cite[proposition 2.1 (2)]{SE-T2}, as in \cite[Example 4.1]{SE-T2} or as in
\cite[Theorem 5.1 (n=2)]{SE-T4},
we can solve the Dirichlet problem $(P)$ above
and find a solution $g: U\rightarrow \R$ whose finite asymptotic boundary of
the graph, $M$, is
\begin{equation*}
 \Big(\{i\} \times [-A,0] \Big) \cup \Big(\gamma_\infty \times \{-A \} \Big)
\cup
 \Big( \{e^{i \theta_0}\} \times [-A,+\infty) \Big)
 \cup  \Big( \{1\} \times [0,+\infty) \Big)
\end{equation*}
and the non finite asymptotic boundary of $M$ is
\begin{equation*}
 \Big(\gamma \times \{ +\infty\}\Big)
 \cup  \Big( \{1, e^{i \theta_0}\}\times \{ +\infty\}\Big).
\end{equation*}

\

\setcounter{claim}{0}

We claim that the following phenomena hold.

\claim \label{claim.infinite} Let $p_\infty, q_\infty$ be points in
 $\gamma_\infty$ such that $p_\infty, q_\infty \not= e^{i \theta_0}, i$.
 Let $\alpha\subset \h^2$ be the geodesic whose asymptotic boundary is
 $\{ p_\infty, q_\infty  \}$. We call $U_1 \subset \h^2$ the component of
 $\h^2 \setminus \alpha$ whose asymptotic boundary is the subarc
 $[p_\infty, q_\infty]$ of $\gamma_\infty$. We have $U_1 \subset U$. Let
 $S_1\subset M$ be the graph of $g$ restrited to $U_1$.

 Then it follows
 from Corollary \ref{C.arc} that  $S_1$ has infinite total
curvature.

\smallskip

Furthermore,  we have
$\vert n_3(q)\vert \to 1$ if
$q \to   \gamma_\infty \setminus \{ e^{i \theta_0}, i\}$,
$q\in U_1$.

\medskip

\claim \label{claim.vertical} Let $S_2 \subset M$ be a domain such that
 its asymptotic boundary is a  compact arc of
 $ \Big( \{1\} \times [0,+\infty)  \Big)$. Then it can be showed that $S_2$ has
finite total curvature.

\

In order to outline the proof of Claim (\ref{claim.vertical}), we state the
following facts.

\smallskip

\begin{enumerate}
 \item \label{item.domain} {\em Let $V\subset U$ be a subdomain
such that  its asymptotic boundary is constituted of zero, one or two points of
$\pain U$. If $g$ is constant along the boundary of $V$, then $g$ is constant
on $V$, which leads to a contradiction. }

To prove this fact we use the maximum principle and the family of complete
minimal surfaces $M_d$, $d>1$,
described
in \cite[Proposition 2.1-(1)]{SE-T2} and in the
proof of \cite[Theorem 3.2]{NST}.

\vskip1.5mm

\item\label{item.critical} {\em The function $g$ has no critical points on $U$.
}

Indeed, if $g$ would have a critical point $p\in U$, with $g(p)=c >-A$, then in
a neighborhhod of $p$, the level set $g^{-1}(\{c\})$
is constituted at least of four analytic arcs issuing from $p$. Observe that
any level set of $g$ cannot have end points in $U$.
 Observe also that the asymptotic boundary of the level set
$g^{-1}(\{c\})$ is included in $\{1, e^{i\theta_0},i\}$.
Therefore, continuing any
of the  analytic arcs issuing from $p$, we obtain a domain $V$ as in item
(\ref{item.domain}), which leads to a contradiction.

\vskip1.5mm

\item\label{item.level} {\em For any real number $c\in (-A, +\infty)$,
 the level set
$g^{-1}(\{ c \})$ is constituted of an unique simple divergent curve in $U$
and its asymptotic boundary is contained in $\{1, e^{i\theta_0}, i   \}$.
}

To proof this assertion, we first study the different possible cases of the
level set $g^{-1}(\{ 0 \})$. Then for each one of those cases we apply
 the items
(\ref{item.domain}) and (\ref{item.critical}).

\vskip1.5mm

\item {\em Using the reflection principle along the two geodesic rays starting
from the origine and whose asymptotic boundary are $\{ 1\}$ and $\{i\}$
respectively, we
obtain a complete minimal surface $\wt M \subset \h^2 \times \R$ which is a
graph. Hence $\wt M$ is stable and from \cite[Main Theorem]{RST} we obtain
global upper estimates for the norm of the second fundamental form of $M$.
 Observe that those upper estimates do not depend on A.}

\vskip1.5mm

We denote by $(0,1)$ the  open geodesic ray starting at $0$
whose asymptotic boundary
is $1$. We denote by $R$ the reflection in $\h^2 \times \R$
 with respect to the
geodesic ray $(0,1)$ and we set $M^*:= M \cup (0,1) \cup R (M)$. Then $M^*$ is
a minimal surface which is a graph over the domain
$U_1 := U \cup (0,1) \cup R (U)$ of $\di$.
\vskip1.5mm

\item\label{item.rho} {\em For any $\rho >0$ we set
$\mathcal{Z}_\rho = \{ \xi \in U_1,\ d_{\h^2}(\xi, \gamma) < \rho \}$.
Then, for any $c\in (0,1)$ there exists $\rho_c >0$ such that
$\vert n_3 (\xi)\vert < c$, for any $\xi \in \mathcal{Z}_{\rho_c}$.
Furthermore the number $\rho_c$ does not depend on $A$.
}

Indeed, if the assertion is not true, there would exist a sequence
$(p_n)$ in $U_1$ such that
\begin{itemize}
 \item $d_{\h^2}(p_n, \gamma) \to 0$,

\item $\vert n_3 (p_n) \vert \geq c$ for any $n$.
\end{itemize}

\medskip

Let $\xi_0 \in \gamma$ be any fixed point,  we set
$D_1:= \{ \xi \in U,\ d_{\h^2}(\xi, \xi_0) <1 \}$.

Observe that for any  $n$  large enough we can use a translation
$T_n$ along the geodesic
$\gamma$ to send $p_n$ on  a  point $T_n (p_n)$ in the domain $D_1$. By
construction we have that $d_{\h^2}(T_n (p_n), \gamma) \to 0$.
 Using the global upper estimates for the norm of the second
fundamental form of $M$, we  can
proceed as in the proof of the Claim in Theorem \ref{Main theorem} to reach a
contradiction.
 Since those upper estimates do not depend on $A$, we obtain
also that the number $\rho_c$ does not depend on A.

\vskip1.5mm

\item\label{item.rectangle} {\em
 From now on, we choose a fixed number
 $c\in (0,1)$. Let
$\rho_c >0$ be the positive real number given in item $(\ref{item.rho})$.
We call $\alpha \in U$ the geodesic whose asymptotic boundary is
$\{i, e^{i\theta_0}\}$. We denote by $M_{d_A}$,
 $d_A >1$, the surface of the family
$M_d$, described
in \cite[Proposition 2.1-(1)]{SE-T2} and in the
proof of \cite[Theorem 3.2]{NST},  such that
\begin{itemize}
 \item the height of $M_{d_A}$ is $A$,

\item  $M_{d_A}$ is symmetric with respect to the slice
 $\h^2 \times \{0\}$,

\item for any $t\in (-\frac{A}{2}, \frac{A}{2})$ the intersection
$M_{d_A} \cap \big(\h^2 \times \{ t  \}\big)$ is an equidistant curve of the
geodesic $\alpha$.
\end{itemize}
Then, we have $M \cap M_{d_A} = \emptyset$. Consequently, we have
$M\cap \left( M_{d_A} + (0,0,t)  \right)=\emptyset$ for any
$t \geq 0$.
}

Observe that, using the notations of \cite[Proposition 2.1-(1)]{SE-T2} we have
$A=2 H(d_A)$. Moreover the asymptotic boundary of $M_{d_A}$ is
\begin{equation*}
 \left(\{i, e^{i\theta_0} \} \times \Big[-\frac{A}{2}, \frac{A}{2}\Big]\right)
\cup
\left( \gamma_\infty \times \Big\{ -\frac{A}{2}, \frac{A}{2} \Big\} \right) .
\end{equation*}
where $\gamma_\infty:= \{e^{i\theta},\ \theta_0 \leq \theta
\leq \pi/2 \}\subset \pain \h^2$.

Let $\theta_1 \in (\theta_0, \pi/2)$ be a fixed number and let
$\beta \subset \h^2$ be the geodesic whose asymptotic boundary is
$\{-e^{i\theta_1}, e^{i\theta_1}  \}$.

To prove the first assertion we consider the hyperbolic translation along the
geodesic $\beta$ and proceed as in \cite[Theorem 3.2]{NST}. The second
assertion is a consequence of the first one.

\vskip1.5mm

\item\label{item.ray} {\em Let $\delta_0 \subset \h^2$ be the
geodesic ray
issuing from 0 and
with asymptotic boundary $\{ e^{i\theta_0}  \}$.
For any $r>0$ we denote by $Q_r$ the vertical geodesic plane intersecting
orthogonally $\delta_0 $ at distance $r$ from 0. Let
$Q_r^+ \subset \h^2 \times \R$ be the component of
$(\h^2 \times \R) \setminus Q_r$ containing $e^{i\theta_0}$ in its asymptotic
boundary.

Let $c\in (0,1)$  be a fixed number and let
$\rho_c >0$ be the positive real number given in item $(\ref{item.rho})$.

Then, if $A$ is large enough, there exists $r>0$
so that
\begin{equation}\label{Eq.inclus}
 \left(M\cap Q_r^+\right) \cap \{t\geq 0\} \subset
\mathcal{Z}_{\rho_c} \times [0, +\infty).
\end{equation}
}

The proof of the assertion is based upon the following observation.

Since $\rho_c >0$ does not depend on $A$, observe that for $A>0$ large enough
we have
$M_{d_A} \cap \big(\mathcal{Z}_{\rho_c} \times [0, +\infty)\big) \not=
\emptyset$. For such a number $A$, using the last affirmation of item
(\ref{item.rectangle}) certainly we can find a number
$r>0$ large enough satisfying (\ref{Eq.inclus}).

\bigskip

Now we pause momentarily to recall some facts derived from \cite{HNST}, \cite{HST} and
\cite{SE-T3}.

Let $X : \di \rightarrow \h^2 \times \R$ be a conformal parametrization of $M$.
We set as in \cite{SE-T3} $X=(F,h)$, thus
 $F : (\di, g_{\rm euc}) \rightarrow \h^2$ is a
harmonic map and $h : \di \rightarrow \R$ is a harmonic function,
 where $g_{\rm euc}$ is the Euclidean metric.

Since $g$ has no critical point on $U$, we have $\vert n_3 \vert \not= 1$ along
$M$. Therefore we can define a  real function $\omega$ on $\di$
by the relation:
$ \tanh \omega = n_3$.

We consider also the function $\phi$ on $\di$ defined by
$\phi := (\sigma \circ F)F_z \ov F_z$, where $\sigma$ is the conformal factor
of the hyperbolic metric of $\h^2$. Since $F$ is a harmonic map,
$\phi$ is a holomorphic function.

The metric induced on $\di$ by the immersion $X$ is
\begin{equation*}
  ds^2=4 \cosh^2 (\omega)\, |\phi |\, |dz|^2.
\end{equation*}
Moreover we have $\phi(z)=-(h_z (z))^2$, \cite[Proposition 1]{SE-T1}.
Now we define a holomorphic function $W$ on $\di$ setting
\begin{equation*}
 W(z)= \int \sqrt{\phi (z)}\, dz,
\end{equation*}
where the square root of $\phi$ is chosen so that
\begin{equation}\label{Eq.height}
 h=2 \Imag W(z)
\end{equation}

\vskip1.5mm

\item\label{item.univalent} {\em The function $W$ is a univalent map, hence $W$
is a
holomorphic diffeomorphism between
$\di$ and the open subset $\wt \Omega := W(\di)$ of $\C$.
}

It follows from item (\ref{item.level}) that for any $c\in (-A,+\infty)$,
the level curve
$h^{-1}(\{ c \})$ is constituted of an unique simple divergent curve in $\di$.
We deduce from item (\ref{item.critical}) that $h$ has no critical point.
Consequently the conjugate function $^*h$ is strictly monotonous along any
level curve of $h$. Combining with Formula (\ref{Eq.height}) we conclude   that
$W$ is an univalent map.

\vskip1.5mm

Now we define the function $\wt \omega$ on $\wt \Omega$ setting
\begin{equation}\label{Eq.tilde}
 \wt \omega := \omega \circ W^{-1}.
\end{equation}
 We know from \cite[Formula (12)]{HNST} that the function
$\wt \omega$ satisfies
\begin{equation}\label{Eq.Laplacian}
 \Delta \wt \omega = 2 \sh \wt \omega,
\end{equation}
where $\Delta$ is the Laplacian for the Euclidean metric.

We consider also the new conformal parametrization
$\wt X : \wt \Omega \rightarrow \h^2 \times \R $
of $M$ given by
$\wt X= X \circ  W^{-1}$. Denoting by $w$ the coordinate on $\wt \Omega$, the
induced metric on $\wt \Omega$ reads  as
\begin{equation}\label{Eq.metric}
 d\wt s^2 = 4\cosh^2 \wt \omega (w)\, \arrowvert dw\arrowvert^2,
\end{equation}
We define also the function $W_0 : U \rightarrow \wt \Omega \subset \C$ setting
 $W_0 : = W \circ (\Pi \circ X)^{-1}$, where
$\Pi : \h^2 \times \R \rightarrow \h^2$ is the first projection. As a matter of
fact, $W_0$ is nothing but the function $W$ read on $U$, in particular $W_0$
is an open map. Observe also that by means of the reflections with respect to
the geodesic rays $(0,1)$ and $(0,i)$ issuing from $0$ with asymptotic boundary
$\{1\}$ and $\{i\}$ respectively, the map $W_0$ can be extended to a larger
open set
$\wh U\subset \di$ containing $U$ and the open geodesic rays, and this extended
map is still an open map.

Observe also that $g= 2 \Imag W_0$.

Since $g= h \circ (\Pi \circ X)^{-1}$, we define the ``conjugate function''
$^*g$ setting \linebreak
$^*g :=\,  ^*h \circ (\Pi \circ X)^{-1}$. Since
$g$ has no critical point on $U$, we observe that
$^*g$ is strictly monotonous on any level curve of $g$. From the relation
(\ref{Eq.height}) we get that $^*g = -2 \Real W_0$.

\vskip3mm

\item\label{item.zero} {\em The level curve $L_0:=g^{-1}(\{0 \})$ cannot be a
simple curve with asymptotic boundary the set $\{1, e^{i\theta_0} \}$.
Consequently, the level curve $L_0$ must have one of the following behaviors.
\begin{itemize}
 \item $\pain L_0 = \{ e^{i\theta_0}, i \}$,

\item $\pain L_0 = \{ e^{i\theta_0} \}$ and $L_0$ has an end point on
$[0,1) \cup (0,i)$.
\end{itemize}
}

Let us assume, by absurd, that $\pain L_0 = \{1, e^{i\theta_0}\}$.
Let $p_0 \in L_0$ be a fixed point. We denote by $L_0^+$ and $L_0^-$ the
components of $L_0 \setminus \{ p \}$ with asymptotic boundary $1$ and
$e^{i\theta_0}$ respectively.

Recall that from item (\ref{item.rectangle}) we have fixed a number
$c\in (0,1)$ and from item (\ref{item.rho}) there exists
$\rho_c >0$ such that $\vert n_3 (\xi)\vert < c$ for any
$\xi \in \mathcal{Z}_{\rho_c}$.

We deduce from item (\ref{item.ray}) that, up to extracting a compact part,
we can assume that $L_0^+ \subset \mathcal{Z}_{\rho_c}$ and
$L_0^- \subset \mathcal{Z}_{\rho_c}$.

Since the graph of $L_0^+$ on $M$ has infinite length, the curve
$W_0 (L_0^+)\subset \wt \Omega$ must have infinite length as well, for the
metric $d\wt s ^2$, see (\ref{Eq.metric}).

As $\vert n_3 \vert < c$ on $\mathcal{Z}_{\rho_c}$, we get that the function
$\wt \omega$ is bounded on $\mathcal{Z}_{\rho_c}$. Thus, the curve
$W_0 (L_0^+)\subset \wt \Omega \subset \C$
must have infinite Euclidean length. We deduce that
\begin{equation*}
 \Real W_0 (\xi) \to \pm \infty,\  \  \text{if} \ \xi \to 1,\ \xi\in L_0.
\end{equation*}
Without loss of generality, we can assume that $ \Real W_0 (\xi) \to + \infty$
when $\xi \to 1$, $\xi \in L_0$.

In the same way, and using the fact that $\Real W_0$ is strictly monotonous on
any level curve of $g$, we get that
\begin{equation*}
 \Real W_0 (\xi) \to - \infty,\  \  \text{if} \ \xi \to e^{i\theta_0},\ \xi\in
L_0.
\end{equation*}
Consequently the image of the level curve $L_0$,  $W_0 (L_0)$, is the whole
real axis in $\C$: $W_0 (L_0)= \R \subset \C$.
We arrive to a contradiction as follows.

Note that $g$ can be extended across the geodesic ray $(0,1)$ by means of the
reflection principle. Note also that the critical points of the extended map,
if any, are isolated.

Let $p_1\in (0,i)$ be a fixed point in the geodesic ray which is not a critical
point of $g$. Thus $W_0$ is a local diffeomorphism near $p_1$. Let
$\mathcal{O}_1$ be an open neighborhood of $p_1$ such that $W_0$ is one-to-one
on $\mathcal{O}_1$. Since $W_0 (L_0)= \R \subset \C$, there exists
$p_2 \in L_0$ such that $W_0 (p_1)=W_0 (p_2)$. Now let
$\mathcal{O}_2 \subset U$ be any open neighborhood of $p_2$. As $W_0$ is an
open map, we get that $W_0( \mathcal{O}_1 )\cap  W_0( \mathcal{O}_2 )$ is an
open set containing $W_0(p_2)$, which gives a contradiction with the fact $W_0$
is one-to-one on $U$.

\vskip1.5mm

\item {\em Let $U^+:= \{\xi \in U,\ g(\xi) >0\}$ and let
$\Omega^+:= \{w \in \C, \Imag w >0 \}$. Then
$W_0 (U^+)= \Omega^+\subset \wt \Omega $.
}

Indeed, for any $c >0$ we set  $L_c:= g^{-1}(\{c \}) $. Thus
$W_0(L_c)$ is contained in the horizontal line of $\Omega^+$ at height $2c$:
$W_0(L_c)\subset \{w\in \C, \Imag w =2c  \}  $, since $g=2\Imag W_0$.

We can prove in the same way as in the proof of item (\ref{item.zero}), that
$W_0(L_c)$ is the whole line $\{w\in \C, \Imag w =2c  \}$. We conclude that
$W_0 (U^+)= \Omega^+\subset \wt \Omega $.

\vskip1.5mm

\item {\em Let $p_0\in (0,1)$ be any fixed point of the geodesic ray $(0,1)$,
such that   $u_0 := W_0 (p_0) >0$. We
consider the subset
$\Omega_1 :=\{w\in \Omega^+,\ \Real w >~u_0 \}$. Let
$S\subset M$ be the corresponding part of $M$, that is
$S=\wt X (\Omega_1)$, see the discussion after the item (\ref{item.univalent}).
Then, $S$ has finite total curvature.
}

Since $\wt X : (\wt \Omega, d \wt s ^2 )\rightarrow \h^2 \times \R$ is an
isometric immersion, it is equivalent to prove that  $\Omega_1$ has finite
total curvature with respect to the metric $d \wt s ^2$.

For any  $C > 0$, we consider the square $R(C)\subset \Omega_1$
with
horizontal sides $H_0(C)$ and $H_1(C)$ and vertical sides $V_0 (C)$ and $V_1(C)$
defined by
\begin{itemize}
 \item $H_0(C)$ is the horizontal segment with end points $u_0$ and
$u_0 + C)$,

 \item $H_1(C)$ is the horizontal segment with end points $u_0 +iC$ and
$u_0 + C +iC$,

 \item $V_0(C)$ is the vertical segment with end points $u_0$
and $u_0 +i C$,

 \item $V_1(C)$ is the vertical segment with end points $u_0 +C$ and
$u_0 + C + iC$,
\end{itemize}

The Gauss-Bonnet theorem applied to the square $R(C)$ gives
\begin{equation}\label{Eq.Gauss-Bonnet}
 \int_{R(C)} K\, dA = -\int_{\partial R(C)} k_g\, ds,
\end{equation}
where $dA$ is the area element of $(\wt \Omega, d\wt s^2)$, $K$ is the Gaussian
curvature, $k_g$ is the geodesic curvature along $\partial R(C)$ parametrized
by the  arc length  $s$. Hence it suffices to show  that the right integral of
(\ref{Eq.Gauss-Bonnet}) is bounded if $C \to +\infty$.

\smallskip

We have
\begin{equation*}
 \int_{\partial R(C)} k_g\, ds =  \int_{H_0(C)} k_g\, ds +
 \int_{H_1(C)} k_g\, ds + \int_{V_0(C)} k_g\, ds
 +\int_{V_1(C)} k_g\, ds.
\end{equation*}
Since the geodesic ray $(0,1)$ is a geodesic of $M$, we get that $H_0(C)$ is a
geodesic of $S$ for any  $C>0$. Thus
\begin{equation*}
  \int_{H_0(C)} k_g\, ds =0
\end{equation*}
for any  $C>0$. We are going to prove that the integral
 on $V_0 (C)$ is
bounded when $C\to +\infty$.

We choose the following parametrization of $V_0 (C)$,
\begin{equation*}
 \gamma (t)= u_0 + i\, tC,\ \ t\in [0,1].
\end{equation*}
 Let $w=u+iv$ be the coordinates on $\wt \Omega$. We
deduce from the expression of the metric $d\wt s^2$, see Formula
(\ref{Eq.metric}), and  from
\cite[Formula (42.8)]{Kreyszig}, that the geodesic curvature of the curve
$\gamma$ is given by
\begin{equation}\label{Eq.courbure}
 k_g (\gamma(t))= \pm\frac{\sinh \wt \omega }{2\cosh ^2 \wt \omega}\,
 \frac{\partial \wt \omega}{\partial u } \big(\gamma (t)\big).
\end{equation}
Let $\Theta\subset \C$ be any domain on which the function
$\wt \omega$ is defined and satisfies the equation (\ref{Eq.Laplacian}).
For any $w \in \Theta$ we denote by $d(w, \partial \Theta)$
the Euclidean distance between $w$ and $\partial \Theta$.
It is shown in the proof of \cite[Proposition 2.3]{HNST} that there exists
a positive constant $\delta$ such that for
any $w \in \Theta$ with $d(w, \partial \Theta)> 2$ we have
\begin{equation*}
 \vert \nabla \wt \omega \vert (w) < \delta e^{-d(w,\partial \Theta)},
\end{equation*}
where $\nabla$ means the Euclidean gradient.

Hence, choosing $\Theta= \Omega^+$ we get
\begin{equation}\label{Eq.gradient}
 \vert \nabla \wt \omega \vert (w) < \delta e^{-\Imag w}
\end{equation}
for any $w\in \Omega^+$ such that $\Imag w >2$. For any $C>3$ we have
\begin{align*}
 &\int_{V_0(C)} k_g\, ds = 2\int_0^1 k_g (\gamma (t))
 \ch \wt \omega (\gamma (t)) \, \vert \gamma^\prime (t)\vert\, dt = \\
  &2\int_0^{3/C} k_g (\gamma (t))
 \ch \wt \omega (\gamma (t)) \, \vert \gamma^\prime (t)\vert\, dt +
 2\int_{3/C}^1 k_g (\gamma (t))
 \ch \wt \omega (\gamma (t)) \, \vert \gamma^\prime (t)\vert\, dt.
\end{align*}
Since $V_0(C)$ is a smooth curve, there exists a constant number $\alpha>0$
such that
\begin{equation*}
 2\int_0^{3/C}\vert k_g (\gamma (t))\vert
 \ch \wt \omega (\gamma (t)) \, \vert \gamma^\prime (t)\vert\, dt < \alpha,
\end{equation*}

 for any $C>4$. On the  other hand, from the formulae
(\ref{Eq.courbure}) and
(\ref{Eq.gradient}) we get for any $t >3/C$
\begin{equation*}
2 \vert k_g (\gamma (t))\vert  \ch \wt \omega (\gamma (t))
\leq \vert \nabla \wt \omega\vert (\gamma (t)) <
\delta e^{-t C}.
\end{equation*}
Therefore
\begin{align*}
  2\int_{3/C}^1\vert k_g (\gamma (t))\vert
 \ch \wt \omega (\gamma (t)) \, \vert \gamma^\prime (t)\vert\, dt &\leq
 2\delta C \int_{3/C}^1 e^{-t C} \, dt \\
 &\leq 2\delta C \, \frac{(e^{-3}-e^{-C} )} {C}\\
 &\leq 2\delta (e^{-3}-e^{-C} ) \\
 &\leq 2\delta e^{-3}.
\end{align*}
This proves that the integral $\int_{V_0(C)} k_g\, ds$ is bounded when
$C\to +\infty$.

Choosing again $\Theta= \Omega^+$, we can prove in the same way that
 $\int_{H_1(C)} k_g\, ds \rightarrow 0$ when $C \to +\infty$.

Finally, recall that the minimal surface $M$ can be extended across the
geodesic ray $(0,1)$ by means of the
reflection principle. Therefore the function $\wt \omega$ can be extended to
the domain
$\Theta := \{z\in \C,\ \Re (z) > u_0\}$. Consequently we can prove that
 $\int_{V_1(C)} k_g\, ds \rightarrow 0$ when $C \to +\infty$.

 We conclude that  $S$ has finite total curvature.

\end{enumerate}

\section{Some examples of infinite total curvature minimal surfaces
in $\hip^2\times\R$ and their asymptotic boundary}

Next we exhibit complete and non-complete minimal surfaces $M$ generated
by vertical graphs, pointing out some geometric properties. For this purpose we
choose the Poincar\'e disc model of $\h^2$.

\begin{example}\label{Ex.geodesic}
{\em $M$ is non-complete, properly embedded and its asymptotic
boundary is the union
of a discrete set in $\partial_\infty \hi2 \times \R$,
with

\begin{itemize}
\item either the whole
$ (\hi2 \cup \pain \h^2)  \times \{-\infty, + \infty\}$,

\item  or
a finite subset of $\pain \hi2  \times \{-\infty, + \infty\}$ and
$ \bigcup_{i=1}^n \gamma_i \times \{-\infty, + \infty\}$, where
$\gamma_i\subset \hi2$ is a complete geodesic, $i=1,\dots, n$.
\end{itemize}
}

\smallskip

To obtain such a surface we consider, for $\theta \in (0,\pi/2)$ the geodesic
triangle $T$ with vertices $0$, $1$ and $e^{i\theta}$. Let $c >0$ be a positive
real number. Let $\gamma \subset \hd$ be the geodesic   with asymptotic
boundary the points 1 and  $e^{i\theta}$.

Let $f : \gamma \rightarrow \R$ be a continuous and one to one
function, such that $f(\xi)\to 0$ if $\xi \to e^{i\theta}$ and
$f(\xi)\to c$ if $\xi \to 1$.

We consider the Dirichlet problem for the minimal surface
equation on ${\rm int} (T)$ taking the boundary data

\begin{align*}
- & \  c \ \ \text{on the geodesic ray } (0,1), \\
- & \ 0 \ \ \text{on the geodesic ray } (0,e^{i\theta}), \\
- & \  f\  \  \text{on} \ \gamma .
\end{align*}

We deduce from \cite[Theorem 4.1]{SE-T2} that there exists a solution
$u$ to this problem. Thus the graph $S$ of $u$ is a minimal surface
whose  boundary contains the geodesic rays $(0,1)\times \{c\}$ and
$(0,e^{i\theta})\times \{0\}$,
 the vertical segment
$\{(0,t) \in \hi2 \times \R,\ 0\leq t \leq c\}$ and
the graph of $f$ on $\gamma$.
Now we perform the reflection of $S$ with
respect to the geodesic rays $(0,1)$, $(0,e^{i\theta})$ and the new geodesic
rays appearing in this process.

In this way we get a non complete and properly embedded minimal surface $M$
invariant by a discrete group of screw-motions.
The finite
asymptotic boundary is a discrete set of $\h^2 \times \R$

To describe the non finite asymptotic boundary of $M$ we consider two cases.
\begin{itemize}
 \item If the angle $\theta/\pi$ is irrational then the non finite
asymptotic boundary is the whole
$ (\hi2 \cup \pain \h^2 ) \times \{-\infty, + \infty\}$.

\item   If the angle $\theta/\pi$ is rational then the non finite
asymptotic
boundary is composed of a finite subset
$\{\pm \xi_1,\dots, \pm\xi_n\}\times \{-\infty, + \infty\}$ of
$\pain \hi2  \times \{-\infty, + \infty\}$ and
$ \bigcup_{i=1}^n \gamma_i \times \{-\infty, + \infty\}$, where
$\gamma_i\subset \hi2$ is the complete geodesic with asymptotic boundary
$\{-\xi_i, \xi_i\}$, $i=1,\dots, n$.
\end{itemize}
\end{example}

\begin{example}\label{Ex.helix}
 {\em  $M$ is complete, properly embedded and its  finite
asymptotic
 boundary consists in the
 union of two helix type curves.
  The rest of the asymptotic boundary
 consists
\begin{itemize}
 \item either in the whole
 $ (\hi2 \cup \pain \h^2)  \times \{-\infty, + \infty\}$,

\item or
a finite subset of $\pain \h^2  \times \{-\infty, + \infty\}$ and
$ \bigcup_{i=1}^n \gamma_i \times \{-\infty, + \infty\}$, where
$\gamma_i\subset \hi2$ is a complete geodesic, $i=1,\dots, n$.
\end{itemize}
}

\medskip

Let $\theta \in (0,\pi/2)$ be a fixed number. We denote by
$\Gamma_\theta \subset \pain \h^2$ the closed arc of $\pain \h^2$
bounded by $1$ and $e^{i\theta}$ which does not contain $i$.

Let $D_\theta \subset \h^2$ be the domain bounded by the geodesic rays
$(0,1)$ and $(0,e^{i\theta})$ and whose  asymptotic boundary is
$\Gamma_\theta$.

Let $f : \Gamma_\theta \rightarrow \R$ be a continuous and one to one
function, such that $f(e^{i\theta})=0$ and  and
$f(1)=c$, where $c>0$ is a positive number.

We consider the Dirichlet problem for the minimal surface
equation on $D_\theta$ taking the boundary data

\begin{align*}
- &\  c \  \ \text{on the geodesic ray } (0,1), \\
- &\  0 \ \ \text{on the geodesic ray } (0,e^{i\theta}), \\
- &\   f\   \ \text{on} \ \Gamma_\theta .
\end{align*}

We deduce from \cite[Theorem 4.1]{SE-T2} that there exists a solution
$u$ to this problem. Thus the graph $S$ of $u$ is a minimal surface
whose  boundary contains the geodesic rays $(0,1)$ and $(0,e^{i\theta})$
 and the vertical segment
$\{(0,t) \in \hi2 \times \R,\ 0\leq t \leq c\}$.

 Now we perform the reflection of $S$ with respect to the vertical geodesic
$\{ 0 \} \times \R$ and with
respect to the geodesic rays $(0,1)$, $(0,e^{i\theta})$ and the new geodesic
rays appearing in this process.

In this way we get a complete and  properly embedded minimal surface $M$
invariant by a
discrete group of screw-motions.
The finite
asymptotic boundary is composed of to ``helix type'' curves.
  We deduce from Theorem \ref{Main theorem} that $M$ has infinite
total curvature.

To describe the non finite asymptotic boundary of $M$ we consider two cases.
\begin{itemize}
 \item If the angle $\theta /\pi$ is irrational then the non finite
asymptotic
boundary is the whole
$ (\hi2 \cup \pain \h^2 ) \times \{-\infty, + \infty\}$.

\item If the angle $\theta/\pi$ is rational then the non finite
asymptotic
boundary is composed of
  a finite subset
$\{\pm \xi_1,\dots, \pm\xi_n\}\times \{-\infty, + \infty\}$ of
$\pain \hi2  \times \{-\infty, + \infty\}$ and
$ \bigcup_{i=1}^n \gamma_i \times \{-\infty, + \infty\}$, where
$\gamma_i\subset \hi2$ is the complete geodesic with asymptotic boundary
$\{-\xi_i, \xi_i\}$, $i=1,\dots, n$.
\end{itemize}
\end{example}

\begin{example}

{\em
$M$  is non properly immersed and
 its asymptotic boundary is an
 annulus $\partial_\infty \hi2 \times [-a,a]$, where $a >0$.
}

\medskip

In order to construct  such an example, we proceed as in Example \ref{Ex.helix}
above, setting $c=0$, $f(1)=a$, $f(e^{i\theta})=0$ and  $\theta /\pi$ is
irrational. Observe that this
surface is complete far away from the origin.

\end{example}

\begin{example}
{\em The asymptotic boundary of $M$ is either
$\partial_\infty (\hi2 \times \R)\setminus (D\times \{-\infty, +\infty\})$,
where $D$ is an
open geodesic disc of $\hi2$ or the whole asymptotic boundary of $\hi2 \times
\R$.}

\medskip

We proceed as in Example \ref{Ex.geodesic} above, now $c\geq 0$ is a
nonnegative constant and $f \equiv  +\infty$ on the geodesic $ \gamma$.

It can be shown, using \cite[Theorem 4.1]{SE-T2} that this Dirichlet problem
has a solution.

We choose $\theta$ such that $\theta/\pi$ is irrational.

After performing all reflections we get a minimal surface $M$. To describe
the surface $M$ we consider two cases.
\begin{itemize}
 \item $c=0$. In this case $M$ is complete far away from the origin and is
 non properly immersed. Its asymptotic boundary is
$\partial_\infty (\hi2 \times \R)\setminus (D\times \{-\infty, +\infty\})$,
  where $D \subset \hi2$ is the open geodesic disc centered at 0
and such that $D\cap \gamma = \emptyset$ and
$\ov D \cap \gamma \not= \emptyset$.

\item  $c >0$. In this case $M$ is complete, properly immersed,
and its
asymptotic boundary is the whole asymptotic boundary of $\hi2 \times \R$.
\end{itemize}
\end{example}

\begin{example}
{\em $M$ is complete and dense in $\hi2 \times \R$.

Therefore its
 asymptotic boundary is the whole asymptotic boundary of $\hi2 \times \R$.}

\medskip

 We proceed as in Example \ref{Ex.geodesic} above with the following
modifications.
\begin{itemize}
 \item $f \equiv  +\infty$ on the geodesic $ \gamma$.

 \item  On the geodesic ray $(0,e^{i\theta})$ we consider the
constant boundary data 0.

 \item On the geodesic ray $(0,1)$ we consider the boundary data $g$
 given by
\begin{equation*}
 g=\begin{cases}
    \pi \ \text{on}\ (0,1/3) \\
    1 \ \text{on}\ (1/3, 2/3) \\
    0 \ \text{on}\ (2/3,1)
   \end{cases}
\end{equation*}
\end{itemize}
  We choose $\theta$ such that  $\theta/\pi$ is irrational.

It can be shown, using \cite[Theorem 4.1]{SE-T2} that this Dirichlet problem
has a solution.

The  complete minimal surface $M$ obtained by doing all
reflections is dense.
\end{example}

\vskip1.5mm

 All the minimal examples above have  infinite total curvature.

\end{document}